\newtheorem{theorem}{Theorem}[section]
\newtheorem{lemma}[theorem]{Lemma}
\newtheorem{remark}[theorem]{Remark}
\theoremstyle{case}
\newtheorem{corollary}[theorem]{Corollary}
\newtheorem{example}[theorem]{Example}
\newtheorem{note}[theorem]{Note}
\theoremstyle{definition}
\newtheorem{definition}[theorem]{Definition}
\newtheorem{observation}[theorem]{Observation}
\newcommand{\covering}{%
	\mathrel{-\mkern-4mu}<%
}
\begin{document}
	
	\title{On the strong metric dimension of the complement of the zero-divisor graph of a lattice}
	\maketitle
	\markboth{Pravin Gadge and Vinayak Joshi}{On the strong metric dimension of the complement of the zero-divisor graph of a lattice}
	\begin{center}\begin{large}Pravin Gadge$^\text{a}$ and  Vinayak Joshi$^\text{b}$\end{large}\\\begin{small}\vskip.1in$^\text{a}$\emph{ GES's Shri Bhausaheb Vartak Arts, Commerce and Science College, Borivali - 400091, Maharashtra, India.}\\$^\text{b}$\emph{Department of Mathematics, Savitribai Phule Pune University, Pune - 411007, Maharashtra, India.\\E-mail: praving2390@gmail.com (P. Gadge),  vvjoshi@unipune.ac.in (V. Joshi) }\end{small}\end{center}\vskip.2in
	\begin{abstract} In this paper,  we compute the strong metric dimension of the complement of the zero-divisor graph of the blow-up of a Boolean lattice. Using these results, we calculate the strong metric dimension of the total graph,  the maximal graph, the intersection graph of ideals, the complement of the zero-divisor graph of a reduced ring, and the component graph of a vector space.
	\end{abstract}\vskip.2in
	\noindent\begin{Small}\textbf{Mathematics Subject Classification (2020)}:
		05C25, 06A07, 05C17, 13A70.   \\\textbf{Keywords}: Zero-divisor graph, complement of a graph,  resolving set, strong metric dimension, total graph, reduced ring. \end{Small}\vskip.2in
	\vskip.25in
	
	\baselineskip 17truept 
	\section{\bf Introduction}\label{intro}
	
	\par In \cite{Be}, Beck introduced the concept of associating a graph with a ring, focusing primarily on colorings. Later, Anderson and Livingston \cite{AL} introduced the zero-divisor graph of a commutative ring $R$, denoted by $\Gamma(R)$. In this graph, the vertices represent all nonzero zero-divisors of $R$, and two distinct vertices $x$ and $y$ are connected by an edge if and only if $xy = 0$. Many researchers have since examined the connection between the ring-theoretic properties of $R$ and the graph-theoretic characteristics of $\Gamma(R)$.                           
	
	\par The idea of metric dimension of a graph was introduced by Harary and Melter \cite{hara}. In 2004, Seb\"{o} and Tannier \cite{sebo} introduced a more refined parameter of a graph known as the strong metric dimension. Since then, numerous researchers have explored both the metric dimension and the strong metric dimension for various types of graphs, including Cayley graphs, trees, digraphs, and Cartesian product graphs (see \cite{gcl}, \cite{dk}, \cite{skb}, \cite{galai}).    
	
	\par  Determining the metric and strong metric dimensions of graphs is known to be NP-complete, which has led some algebraic graph theorists to focus on finding these parameters for graphs arising from algebraic and ordered structures (see \cite{abachi}, \cite{ip}, \cite{ma}).
	
	\par In this paper, we compute the strong metric dimension of the complement of the zero-divisor graph of a blow-up of a Boolean lattice. Additionally, we provide applications to the total graph, the maximal graph, the intersection graph of ideals in a commutative ring $R$, the complement of the zero-divisor graph of a reduced ring, and the nonzero component graph of a vector space.
	
	\section{Preliminaries}
	\par By $G = G(V, E)$, we refer to a simple, undirected graph $G$ with vertex set $V = V(G)$ and edge set $E = E(G)$. The \textit{complement} of a graph $G$, denoted as $G^c$, is a graph with the same vertex set as $G$, and two vertices are adjacent in $G^c$ if and only if they are not \mbox{adjacent in $G$.}
	
	\par Let $N(v)$ represent the set of all vertices adjacent to a vertex $v$ in $G$, and $N[v] = N(v) \cup \{v\}$. A set $S$ of vertices in a graph $G$ is called a \textit{vertex cover} if every edge in $G$ has at least one endpoint in $S$. The \textit{vertex cover number} of $G$, denoted by $\alpha(G)$, is the minimum cardinality of a vertex cover in $G$. An \textit{independent set} in a graph $G$ is a set of vertices where no two vertices are adjacent. The \textit{independence number} of $G$, denoted by $\beta(G)$, is the size of the largest independent set in $G$. 
	
	\par For a connected graph $G$, consider a subset $S = \{v_1, v_2, \dots, v_k\}$ of $V(G)$, and let $v \in V(G) \setminus S$. The \textit{metric representation} of $v$ with respect to $S$ is expressed as the $k$-vector 
	\[
	D(v \mid S) = (d(v, v_1), d(v, v_2), \dots, d(v, v_k)),
	\]
	where $d(v, v_i)$ denotes the distance between $v$ and $v_i$. If, for $S \subseteq V(G)$, the equality 
	\[
	D(u \mid S) = D(v \mid S)
	\]
	holds for every pair of vertices $u, v \in V(G) \setminus S$, implying that $u = v$, then $S$ is referred to as a \textit{resolving set} for $G$.
	
	The \textit{metric basis} of $G$ is a resolving set $S$ of minimum cardinality, and the number of vertices in such a set is defined as the \textit{metric dimension} of $G$, denoted by $\dim_M(G)$.
	
	\par In a connected graph $G$, a vertex $w$ is said to \textit{strongly resolve} two vertices $u$ and $v$ if there exists a shortest path from $u$ to $w$ that contains $v$, or a shortest path from $v$ to $w$ that contains $u$. A set $W$ of vertices is called a \textit{strong resolving set} for $G$ if every pair of vertices in $G$ is strongly resolved by at least one vertex in $W$. The smallest cardinality of a strong resolving set for $G$ is called the \textit{strong metric dimension} of $G$, denoted by $\text{sdim}_M(G)$.
	
	\indent Let $L$ be a lattice with $0$. Given an element $a \in  L$,  the	 \textit{annihilator} of $a$, denoted by $a^\perp$, is the set of elements $b \in L$ such that $a\wedge b = 0$. 	
	Let $(L, \leq)$ be a lattice. The \textit{dual} of $L$, denoted by $(L^{\partial}, \geq)$, is the lattice with the partial order $a \geq b$ in $L^{\partial}$ if and only if $a \leq b$ in $L$.
	
	\par Let $x$ and $y$ be elements of $L$. Then $y$ \textit{covers} $x$, written $x \covering y$, if $x < y$ and there is no element $z$ such that $x < z < y$. If $0 \covering x$, then $x$ is called an \textit{atom} of $L$. Moreover, $L$ is called \textit{atomic} if every nonzero element contains an atom.
	
	A \textit{chain} is a lattice in which any two elements are comparable. If $a$ and $b$ are incomparable elements of $L$, we denote this by $a \parallel b$.
	
	\indent A lattice $L$ is said to be \textit{bounded} if $L$ has both the least element $0$ and the greatest element $1$. An element $b$ of a bounded lattice $L$ is called a \textit{complement} of $a \in L$ if $a\wedge b = 0$ and $a\vee b = 1$. A \textit{pseudocomplement} of $a \in L$ is an element $b \in L$ such that $a\wedge b = 0$, and if $a\wedge x = 0$, then $x \leq b$. It is straightforward to verify that for any element $a$ in $L$, there is at most one pseudocomplement, denoted by $a^{\ast}$ if it exists.	
	A bounded lattice $L$ is called \textit{complemented} (respectively, \textit{pseudocomplemented}) if every element of $L$ has a complement (respectively, $a^{\ast}$ exists for every $a \in L$).

	A \textit{zero-divisor} of $
	L$ is defined as any element of the set 
	$
	Z(L) = \{a \in L \mid \text{there exists } 0\neq b \in L \text{ such that } a\wedge b = 0\}.
	$
	The \textit{zero-divisor graph} of $L$ is the graph $G(L)$, where the vertices are the elements of $Z^\ast(L) = Z(L) \setminus \{0\}$, and two distinct vertices $a$ and $b$ are adjacent if and only if $a\wedge b = 0$.
	
	A lattice $L$ is called a \textit{$0$-distributive lattice} if $a \wedge b = 0$ and $a \wedge c = 0$ imply $a \wedge (b \vee c) = 0$. Dually, we have the concept of a \textit{$1$-distributive lattice}.
	
	\section{\bf Strong Metric Dimension of the complement of the Zero-Divisor Graph of  blow-up of a Boolean Lattice}
	
	In this section,  we compute the strong metric dimension of the complement of the zero-divisor graph of a blow-up of a Boolean lattice. Also, we study the structure of $G^c(L^B)_{SR}$ and compute the $sdim_{M}(G^c(L^B))$ through $G^c(L^B)_{SR}$.

	It is known that the complement of the zero-divisor graph of a poset need not be connected in general. However, in the case of an atomic $0$-distributive poset, Devhare et al. \cite[Theorem 2.15]{sdj1} proved that $G^c(P)$ is connected. We quote this result when $L$ is an atomic $0$-distributive lattice.
	
	\begin{theorem}[{Devhare et al. \cite[Theorem 2.15]{sdj1}}]\label{star}
		Let $L$ be an atomic $0$-distributive lattice such that $Z^*(L)\neq \{0\}$. Then $G^c(L)$ has at most two components. Moreover, if $L$ has exactly two atoms, then $G^c(L)$ is a disjoint union of two complete graphs, and $G^c(L)$ is connected if and only if $L$ has at least three distinct atoms. Furthermore, if $G^c(L)$ is connected then diam$(G^c(L))=2$ .
	\end{theorem}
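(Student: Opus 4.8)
The plan is to reduce adjacency in $G^c(L)$ to a purely combinatorial statement about atoms and then split on the number of atoms. For a nonzero $a\in L$ let $\mathrm{At}(a)$ denote the set of atoms lying below $a$; atomicity guarantees $\mathrm{At}(a)\neq\emptyset$ whenever $a\neq 0$. The first step is the characterization
\[
a\wedge b=0 \iff \mathrm{At}(a)\cap\mathrm{At}(b)=\emptyset .
\]
The implication from right to left is the nontrivial one and is exactly where atomicity is used: if $a\wedge b\neq 0$ then some atom lies below $a\wedge b$ and hence in $\mathrm{At}(a)\cap\mathrm{At}(b)$; the converse is immediate, since a common atom lies below $a\wedge b$. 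Consequently two distinct vertices are adjacent in $G^c(L)$ precisely when they share a common atom, so $G^c(L)$ is the intersection graph of the nonempty sets $\mathrm{At}(a)$.

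Next I would record two preliminary facts. First, the hypothesis $Z^*(L)\neq\emptyset$ forces at least two atoms: a nonzero zero-divisor $a$ satisfies $a\wedge b=0$ for some $b\neq 0$, and then $\mathrm{At}(a)$ and $\mathrm{At}(b)$ are disjoint and nonempty. Second, $0$-distributivity controls the atoms below joins of atoms: if $r$ is an atom distinct from the atoms $p,q$, then $r\wedge p=r\wedge q=0$, so $r\wedge(p\vee q)=0$; iterating, $\mathrm{At}(p_1\vee\cdots\vee p_n)=\{p_1,\dots,p_n\}$ for any atoms $p_i$. This is the only place $0$-distributivity enters the argument.

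The case analysis then runs as follows. If $L$ has exactly two atoms $p,q$, then no vertex can have both below it, for $\mathrm{At}(a)=\{p,q\}$ would leave no nonzero $b$ with $\mathrm{At}(b)$ disjoint from $\mathrm{At}(a)$, contradicting that $a$ is a zero-divisor; hence every vertex has support $\{p\}$ or $\{q\}$. These two classes are each complete (shared atom) with no edges between them (disjoint supports), and $p,q$ show both are nonempty, giving a disjoint union of two complete graphs. If instead $L$ has at least three atoms, I would establish diameter $2$ directly: for non-adjacent $a,b$ pick $p\in\mathrm{At}(a)$, $q\in\mathrm{At}(b)$ (so $p\neq q$) and set $c=p\vee q$. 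Then $c\wedge a\geq p>0$ and $c\wedge b\geq q>0$, so $c$ is adjacent to both; $c\neq a,b$, since $c=a$ would give $q\leq a\wedge b=0$; and a third atom $s\neq p,q$ yields $s\wedge c=0$, so $c$ is a genuine vertex. Hence any two vertices are at distance at most $2$, the graph is connected, and distinct atoms being non-adjacent forces the diameter to equal $2$. Since these two cases exhaust the possibilities, $G^c(L)$ has at most two components.

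The main obstacle is the three-atom step, specifically checking that the proposed common neighbor $c=p\vee q$ is itself a vertex, i.e. a nonzero zero-divisor. This is precisely where atomicity and $0$-distributivity combine — one needs a third atom to annihilate $p\vee q$ — and it is also the property that fails when only two atoms are present, neatly explaining the dichotomy between connectedness and two components. The remaining arguments are routine bookkeeping with the atom-support characterization.
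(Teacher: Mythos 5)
There is nothing in the paper to compare against here: Theorem~\ref{star} is imported verbatim from Devhare et al.\ \cite[Theorem 2.15]{sdj1} and the paper gives no proof of it, so your argument can only be judged on its own merits. On those merits it is correct and complete. The atom-support characterization ($a\wedge b=0$ iff $\mathrm{At}(a)\cap\mathrm{At}(b)=\emptyset$, with atomicity used exactly in the nontrivial direction) is sound; the observation that $Z^*(L)\neq\emptyset$ forces at least two atoms correctly rules out the one-atom case, so your two cases really are exhaustive; the two-atom case correctly yields two nonempty complete graphs (both $p$ and $q$ are themselves vertices); and in the three-atom case you correctly verify all the points that a careless argument would skip --- that $c=p\vee q$ is distinct from $a$ and $b$, and, crucially, that $c$ is a genuine vertex because a third atom annihilates it via $0$-distributivity. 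The existence of two non-adjacent atoms pins the diameter at exactly $2$. One remark on scope: your proof is lattice-specific, since it leans on the existence of the join $p\vee q$; the cited source proves the statement in the more general setting of atomic $0$-distributive \emph{posets}, where joins need not exist, so their argument must route around this (using common upper bounds or lower-cone conditions instead). For the way the theorem is used in this paper --- always for lattices --- your simpler join-based argument suffices, and it cleanly isolates where each hypothesis (atomicity, $0$-distributivity, $Z^*(L)\neq\emptyset$) is needed.
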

	
	By using this fact, we have the following result.
	
	\begin{lemma}\label{finites}
		\par Let $L$ be an atomic $0$-distributive lattice with at least three atoms. Then $dim_M (G^c(L))$ is finite if and only if $G^c(L)$ is finite. 
	\end{lemma}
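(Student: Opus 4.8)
The plan is to prove the two directions separately, with the nontrivial one handled by contrapositive and driven by the fact that $G^c(L)$ has diameter $2$. The forward direction is immediate: if $G^c(L)$ is finite, then its entire (finite) vertex set is trivially a resolving set, since there are no vertices left outside it to resolve, and hence $\dim_M(G^c(L)) \le |V(G^c(L))| < \infty$.

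For the converse I would argue the contrapositive: assuming $G^c(L)$ is infinite, I show that no finite resolving set can exist. The crucial ingredient is Theorem \ref{star}: since $L$ is atomic and $0$-distributive with at least three atoms, $G^c(L)$ is connected and $\mathrm{diam}(G^c(L)) = 2$. Consequently, for any two distinct vertices $u$ and $w$, the distance $d(u,w)$ lies in $\{1,2\}$.

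Now suppose, toward a contradiction, that $S = \{v_1, \ldots, v_k\}$ is a finite resolving set of cardinality $k$. For each vertex $v \in V(G^c(L)) \setminus S$ and each index $i$, we have $v \neq v_i$, so $d(v, v_i) \in \{1, 2\}$; thus the metric representation $D(v \mid S)$ is a vector in $\{1,2\}^k$. There are only $2^k$ such vectors, so at most $2^k$ vertices outside $S$ can receive pairwise-distinct representations. Since $S$ is resolving, distinct vertices have distinct representations, forcing $|V(G^c(L)) \setminus S| \le 2^k$, and hence $|V(G^c(L))| \le 2^k + k < \infty$, contradicting the assumption that $G^c(L)$ is infinite. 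Therefore every resolving set is infinite, so $\dim_M(G^c(L))$ is infinite, completing the contrapositive.

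The only real content is the diameter-$2$ bound supplied by Theorem \ref{star}; once that is in hand the counting is completely routine, so I do not anticipate a genuine obstacle. The one point worth stating carefully is that the three-atoms hypothesis is exactly what guarantees both connectedness, so that the metric dimension is well defined, and $\mathrm{diam}(G^c(L)) = 2$, so that every coordinate of a metric representation is forced to be $1$ or $2$, which is what makes the finite bound $2^k$ on the number of representations possible.
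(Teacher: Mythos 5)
Your proposal is correct and follows essentially the same route as the paper: both directions hinge on Theorem \ref{star} giving $\mathrm{diam}(G^c(L))=2$, so that a resolving set of size $k$ forces all metric representations into $\{1,2\}^k$ and bounds the vertex count by roughly $2^k$. Your contrapositive framing and the slightly more careful bound $|V(G^c(L))|\le 2^k+k$ (accounting for the vertices of $S$ itself, which the paper glosses over) are only cosmetic differences from the paper's direct argument.
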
  
	\begin{proof}
		Assume that  $dim_M (G^c(L))$ is finite. Let $W$ be the metric basis for $G^c(L)$ with $|W|=k$ for some non-negative integer $k$. By Theorem \ref{star}, the diameter of $G^c(L)$ is equal to $2$, i.e., $d(x,y)\in \{1,2\}$ for any distinct $x,y\in V(G^c(L))$. Then for each $x\in V(G^(L))$, the metric representation $D(x|W)$ is the $k$-coordinate vector, where each coordinate is in the set $\{1,2\}$. Thus, there are only $2^k$ possibilities for $D(x|W)$. Since $D(x|W)$ is unique for each $x\in V(G^c(L))$, so $|V(G^c(L))|\leq 2^k$. This implies that $V(G^c(L))$ is finite. Hence $G^c(L)$ is finite. The converse is obvious.		
	\end{proof}
	It is easily seen that every strong resolving set is also a resolving set, which leads to $\operatorname{dim}_M(G) \leq \operatorname{sdim}_M(G)$.

	Hence, we have the following corollary.
	\begin{corollary}
		Let $L$ be an atomic $0$-distributive lattice  with at least three atoms. Then $\operatorname{sdim}_M(G^c(L))$ is finite if and only if $G^c(L)$ is finite.
	\end{corollary}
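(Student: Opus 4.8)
The plan is to derive this directly from Lemma~\ref{finites} together with the elementary inequality $\dim_M(G)\leq\operatorname{sdim}_M(G)$ recorded just before the statement; no new combinatorial work is needed, since the hypotheses on $L$ (atomic, $0$-distributive, at least three atoms) are exactly those of the Lemma.

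For the nontrivial implication, I would assume that $\operatorname{sdim}_M(G^c(L))$ is finite and let $W$ be a strong resolving set realizing it. Because every strong resolving set is in particular a resolving set, $W$ resolves $G^c(L)$, so $\dim_M(G^c(L))\leq |W|=\operatorname{sdim}_M(G^c(L))<\infty$. Since $L$ is an atomic $0$-distributive lattice with at least three atoms, Lemma~\ref{finites} applies and yields that $G^c(L)$ is finite. This is the whole content of the forward direction.

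For the converse I would argue that finiteness of $G^c(L)$ makes finiteness of $\operatorname{sdim}_M(G^c(L))$ automatic: the entire vertex set $V(G^c(L))$ is trivially a strong resolving set (for any pair $u,v$, the vertex $u$ itself strongly resolves $\{u,v\}$ since $u$ lies on the trivial shortest $v$--$u$ path), so $\operatorname{sdim}_M(G^c(L))\leq |V(G^c(L))|<\infty$.

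There is no genuine obstacle here; the only point requiring a moment's care is confirming that the inequality $\dim_M\leq\operatorname{sdim}_M$ is used in the correct direction, namely that a finite upper bound on $\operatorname{sdim}_M$ forces $\dim_M$ to be finite, after which Lemma~\ref{finites} carries the argument. The statement is genuinely a corollary, and I would keep the write-up to these two short implications.
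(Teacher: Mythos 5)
Your proposal is correct and follows essentially the same route as the paper: the forward direction combines the inequality $\dim_M(G)\leq\operatorname{sdim}_M(G)$ (stated in the paper immediately before this corollary) with Lemma~\ref{finites}, and the converse is the trivial observation that a finite graph has finite strong metric dimension. The paper presents this without a written proof, treating exactly these two observations as the justification, so there is nothing to add or correct.
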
 
	
	The following result is due to Gallai, which states the relationship between the independence number $\beta(G)$ and the vertex cover number $\alpha(G)$ of a graph $G$. 
	
	\begin{theorem}[Gallai's Theorem] \label{galai}
		For any graph $G$ of order $n$, $\alpha(G) + \beta(G) = n$.
	\end{theorem}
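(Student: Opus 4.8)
The plan is to exploit the set-theoretic duality between vertex covers and independent sets: a subset $S \subseteq V(G)$ is a vertex cover if and only if its complement $V(G) \setminus S$ is an independent set. This single observation will deliver both inequalities $\alpha(G) + \beta(G) \leq n$ and $\alpha(G) + \beta(G) \geq n$, from which equality follows at once.

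First I would verify the claimed equivalence. If $S$ is a vertex cover, then every edge of $G$ has an endpoint in $S$, so no edge can have both its endpoints lying in $V(G) \setminus S$; hence $V(G) \setminus S$ is independent. Conversely, if $V(G) \setminus S$ is independent, then no edge has both endpoints outside $S$, which forces every edge to meet $S$, so $S$ is a vertex cover. This establishes a one-to-one correspondence $S \mapsto V(G) \setminus S$ between vertex covers and independent sets that reverses cardinalities in the sense that $|V(G) \setminus S| = n - |S|$.

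Next I would translate this into the two bounds. Taking $S$ to be a minimum vertex cover with $|S| = \alpha(G)$, the set $V(G) \setminus S$ is independent and has $n - \alpha(G)$ vertices, so $\beta(G) \geq n - \alpha(G)$, giving $\alpha(G) + \beta(G) \geq n$. Symmetrically, taking $I$ to be a maximum independent set with $|I| = \beta(G)$, its complement $V(G) \setminus I$ is a vertex cover of size $n - \beta(G)$, so $\alpha(G) \leq n - \beta(G)$, giving $\alpha(G) + \beta(G) \leq n$. Combining the two inequalities yields $\alpha(G) + \beta(G) = n$.

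There is no serious obstacle here, as the argument is entirely elementary; the only point requiring care is to state the complementation equivalence cleanly and to check that passing to complements genuinely interchanges minimum covers with maximum independent sets rather than merely relating arbitrary covers to arbitrary independent sets. Making that optimality transfer explicit in both directions is the crux that turns the bijection into the desired identity.
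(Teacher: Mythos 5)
Your proof is correct and complete: the complementation equivalence (a set is a vertex cover if and only if its complement is independent) together with the two optimality transfers gives both inequalities, and this is the standard argument for Gallai's identity. Note that the paper itself offers no proof of this statement --- it simply cites it as a classical result of Gallai --- so your write-up supplies exactly the elementary argument the paper takes for granted, and there is nothing in the paper's treatment to compare against or contradict.
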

	
	\begin{definition}
		A vertex $u$ of $G$ is {\it maximally distant} from $v$, if
		$d(v,w)\leq d(u,v)$ for every $w\in N(u)$. If $u$ is maximally distant from $v$ and $v$ is maximally distant from $u$, we describe  $u$ and $v$ as mutually maximally distant in $G$.
	\end{definition} 
	
	One can easily observe that if $u$ is maximally distant from $v$, then  $v$ need not be maximally distant from $u$. In a graph $G^c(L)$ shown in Figure \ref{figure1}, the vertex $(1,0,0)$ is maximally distant from $(1,1,0)$, however $(1,1,0)$ is not mutually maximally distant from $(1,0,0)$.\\
	The {\it boundary} of $G$ is defined as: 
	
	$\partial(G)=\{u\in V(G) \mid \text{there is}~ v\in V(G)~ \text{such that} ~$u,v$~ \text{are mutually maximally distant}\}$.
	
	The $\partial(G)$ of the graph $G^c(L)$ is given in Example \ref{example1}.
	
	We use the notion of a strong resolving graph introduced by Oellermann and Peters-Fransen in  \cite{galai}.
	\begin{definition} [Oellermann and Peters-Fransen \cite{galai}]
		Let $G$ be a graph. The \textit{strong resolving graph} of $G$ is denoted by $G_{SR}$, with the vertex set $V(G_{SR})$ equal to $\partial(G)$ and two distinct vertices $u$ and $v$ are connected in $G_{SR}$ if and only if $u$ and $v$ are mutually maximally distant in $G$.
	\end{definition}
	\begin{note}
		Let $K_t$ denotes the complete graph on $t$ vertices. Then $(K_t)_{SR}=K_t$.
	\end{note}
		\begin{lemma}\label{cjoin}
		Let $L$ be an atomic $0$-distributive lattice with at least three atoms. Then $(G^c(L)\vee K_t)_{SR}=G^c(L)_{SR}+ (K_t)_{SR}=G^c(L)_{SR}+ K_t$, for some $t\in \mathbb{N}$.
	\end{lemma}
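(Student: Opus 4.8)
The plan is to compute all mutually maximally distant (MMD) pairs of the join $H\vee K_t$, where I abbreviate $H=G^c(L)$, and to read off the strong resolving graph directly. By Theorem \ref{star}, the hypothesis of at least three atoms guarantees that $H$ is connected with $\mathrm{diam}(H)=2$. Since each vertex of $K_t$ is adjacent to every other vertex of $H\vee K_t$, the join is connected, every $K_t$-vertex is a universal vertex, and for $u,v\in V(H)$ the distance is unchanged, $d_{H\vee K_t}(u,v)=d_H(u,v)$; in particular $\mathrm{diam}(H\vee K_t)=2$ as well. The computation rests on an elementary observation I would record first: in a graph of diameter $2$, two vertices $u,v$ are MMD if and only if either $d(u,v)=2$, or $d(u,v)=1$ and $N[u]=N[v]$. (If $d(u,v)=2$ the maximal-distance condition is automatic; if $d(u,v)=1$, then $u$ is maximally distant from $v$ exactly when $N[u]\subseteq N[v]$, and symmetrically, so MMD forces $N[u]=N[v]$.)

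The crux of the argument is a purely lattice-theoretic fact: \emph{$H$ has no universal vertex}. Indeed, a universal vertex of $H=G^c(L)$ would be a zero-divisor $u$ with $u\wedge v\neq 0$ for every $v\in Z^\ast(L)\setminus\{u\}$. Each atom $p$ of $L$ is itself a vertex of $H$ (distinct atoms meet in $0$, so atoms are zero-divisors once there are at least two of them), and $u\wedge p\neq 0$ forces $p\le u$; hence $u$ would lie above every atom. But $u$ is a zero-divisor, so $u\wedge b=0$ for some $b\neq 0$, and by atomicity $b$ contains an atom $q\le b$, giving $q\le u\wedge b=0$, a contradiction. Thus every vertex of $H$ has at least one non-neighbour in $H$; this is exactly what prevents the blow-up clique from attaching to the rest of the graph, and I expect it to be the main obstacle to a naive argument.

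With these two facts in hand I would split the MMD analysis into three cases. First, any two vertices of $K_t$ are universal with $N[\,\cdot\,]=V(H\vee K_t)$ and are at distance $1$, hence MMD; so the $K_t$-vertices induce a clique $K_t$ in the strong resolving graph. Second, for a $K_t$-vertex $w$ and a vertex $u\in V(H)$, universality of $w$ gives $d(w,u)=1$, while by the no-universal-vertex fact $u$ has a non-neighbour $v\in V(H)$ with $d(u,v)=2$; since $v\in N(w)$ and $d(u,v)=2>1=d(w,u)$, the vertex $w$ is not maximally distant from $u$, so $w$ and $u$ are not MMD. Hence there are no edges of $(H\vee K_t)_{SR}$ between the $K_t$-part and the $H$-part. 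Third, for $u,v\in V(H)$ we have $N[u]=N_H[u]\cup V(K_t)$ and $N[v]=N_H[v]\cup V(K_t)$, and distances agree with those in $H$; applying the diameter-$2$ criterion in both $H$ and $H\vee K_t$ shows that $u,v$ are MMD in the join if and only if they are MMD in $H$ (the common block $V(K_t)$ cancels in the closed-neighbourhood comparison).

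Finally I would assemble the pieces. The third case shows that the $H$-vertices of $\partial(H\vee K_t)$ are precisely $\partial(H)$ and that the edges among them reproduce $G^c(L)_{SR}$; the second case shows these contribute nothing across to the $K_t$-part; and the first case shows the $t$ vertices of $K_t$ form a disjoint $K_t=(K_t)_{SR}$ (using the Note). Therefore $(G^c(L)\vee K_t)_{SR}$ is the disjoint union $G^c(L)_{SR}+(K_t)_{SR}=G^c(L)_{SR}+K_t$, as claimed. The only delicate point, as noted, is the absence of universal vertices in $G^c(L)$; once that is secured, the three cases are routine neighbourhood bookkeeping.
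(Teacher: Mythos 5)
Your proposal is correct and takes essentially the same route as the paper: both arguments classify the mutually maximally distant pairs of $G^c(L)\vee K_t$ via the same three-way split (pairs within $V(K_t)$, pairs within $V(G^c(L))$, and cross pairs), and both eliminate the cross pairs by the same crux --- every vertex $x$ of $G^c(L)$ admits some $z$ with $x\wedge z=0$, hence a non-neighbour at distance $2$, which is precisely your ``no universal vertex'' fact. The only differences are organisational: you route the case analysis through a diameter-$2$ closed-neighbourhood criterion where the paper verifies the definition of mutual maximal distance directly, and your atom-based derivation of the no-universal-vertex fact is longer than necessary, since any vertex of $G^c(L)$ lies in $Z^*(L)$ and its zero-divisor partner is already the required non-neighbour in $V(G^c(L))$ (no atomicity is needed for this step).
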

	\begin{proof}
		First, we show that $V((G^c(L)\vee K_t)_{SR})=V(G^c(L)\vee K_t)$. To prove this, it is sufficient to show that $V(G^c(L)\vee K_t)\subseteq V((G^c(L)\vee K_t)_{SR})$. 
		
		Let $x\in V(G^c(L)\vee K_t)=V(G^c(L))\cup V(K_t)$. 
		
		Suppose $x\in V(G^c(L))$.  Then there exists some $y\in V(G^c(L))$ such that $x\wedge y=0$. This implies that $d(x,y)=2=diam(G^c(L)\vee K_t)$, by Theorem  \ref{star}. This means that $x$ and $y$ are mutually maximally distant in $G^c(L)\vee K_t$. Thus $x\in V((G^c(L)\vee K_t)_{SR})$. 
		
		Now, if $x\in V(K_t)$, then for every $y\in V(K_t)$, we have $d(y,z_1)\leq d(x,y)=1$ and  $d(x,z_2)\leq d(x,y)=1$ for every $z_1\in N(x)$ and $z_2\in N(y)$. This means that $x$ and $y$ are mutually maximally distant in $G^c(L)\vee K_t$ and  hence $x\in V((G^c(L)\vee K_t)_{SR})$.

Therefore $	V(G^c(L) \vee K_t) \subseteq V\left((G^c(L) \vee K_t)_{{SR}}\right)$.
		Thus,
		\begin{equation}\label{equation}
			V\left((G^c(L) \vee K_t)_{{SR}}\right) = V(G^c(L) \vee K_t) = V(G^c(L)) \cup V(K_t)
		\end{equation}		
		
		Now, we prove that $V(G^c(L)_{SR})=V(G^c(L))$. To prove this it is sufficient to show that $V(G^c(L))\subseteq V(G^c(L)_{SR})$. 
		
		Let $x\in V(G^c(L))$. Then there exist some $y\in V(G^c(L))$ such that $x\wedge y=0$. This implies that $d(x,y)=2=diam (G^c(L))$. This means that $x$ and $y$ are mutually maximally distant in $G^c(L)$ and hence $x\in V(G^c(L)_{SR})$. Thus $V(G^c(L)_{SR})=V(G^c(L))$.	
		
		Now, $V(G^c(L)_{SR}+K_t)=V(G^c(L)_{SR})\cup V(K_t)=V(G^c(L))\cup V(K_t)$. From equation (1), we have $V((G^c(L)\vee K_t)_{SR})=V(G^c(L)_{SR}+K_t)$.

		Let $ x $ be adjacent to $ y $ in $ (G^c(L) \vee K_t)_{SR} $, i.e., $ x $ is mutually maximally distant with $ y $ in $ G^c(L) \vee K_t $.

		We claim that $ x $ is adjacent to $ y $ in $ (G^c(L)_{SR} + K_t) $, i.e., we need to prove that $ x $ is adjacent to $ y $ in $ G^c(L)_{SR}$ or $x$ is adjacent to $y$ in $K_t $. Since $(K_t)_{SR}=K_t$, we essentially prove that either $ x $ is mutually maximally distant with $ y $ in $ G^c(L) $, or $ x $ is mutually maximally distant with $ y $ in $ K_t$.

		 To prove this, it is sufficient to prove that either $x,y\in V(G^c(L))$ or $x,y\in V(K_t)$. 
		 
		On the contrary assume that $ x \in V(G^c(L)) $ and $ y \in V(K_t) $. Then there exists some $ z \in V(G^c(L)) $ such that $ x\wedge z = 0 $, i.e., $ d(x,z) = 2 $. But then in $ G^c(L) \vee K_t $,   $ z\in N(y) $ and $ d(x,z) = 2 \nleq 1 = d(x,y) $, a contradiction to $ x $ and $ y $ being mutually maximally distant in $ G^c(L) \vee K_t $.		
		Thus we  have either $ x, y \in V(G^c(L)) $ or $ x, y \in V(K_t) $, and in both the cases, $ x $ is mutually maximally distant with $y$ either in $G^c(L)$ or in $K_t$. Thus, we have proved that whenever $x$ is adjacent to $ y $ in $ (G^c(L) \vee K_t)_{SR} $, then $x$ is adjacent to $y$ in $G^c(L)_{SR} + K_t$.

		Now, assume that $ x $ is adjacent to $ y $ in $ G^c(L)_{SR} + K_t $.  
		This means that either $x$ is adjacent to $y$ in $ G^c(L)_{SR} $ or $ x $ is adjacent to $ y $ in $ K_t $.
		
		Suppose $ x $ is adjacent to $ y $ in $ K_t $, i.e., $ x $ is mutually maximally distant with $ y $ in $ K_t $.  
		We claim that $ x $ is adjacent to $ y $ in $ (G^c(L) \vee K_t)_{SR} $.  
		
		As $ x, y \in V(K_t) $, then we have $d(y,z_1)\leq d(x,y)=1$ and  $d(x,z_2)\leq d(x,y)=1$ for every $z_1\in N(x)$ and $z_2\in N(y)$.  
		Thus, $ x $ is mutually maximally distant with $ y $ in $ G^c(L) \vee K_t $.  
		Hence $ x $ is adjacent to $ y $ in $ (G^c(L) \vee K_t)_{SR} $.
		
		Now suppose $ x $ is adjacent to $ y $ in $ G^c(L)_{SR} $.  
		Then $ x $ is mutually maximally distant with $ y $ in $ G^c(L) $.  
		We prove that $ x $ is mutually maximally distant with $ y $ in $ G^c(L) \vee K_t $.
		
		As $ x $ is mutually maximally distant with $ y $ in $ G^c(L) $, then for all $ t_1 \in N(x) $, we have $d(t_1, y) \leq d(x, y)$ and
		for all $ t_2 \in N(y), \ d(t_2, x) \leq d(x, y)$		for some $ t_1, t_2 \in V(G^c(L)) $.
		
		Now in $ G^c(L) \vee K_t $, for every $t_3\in V(K_t)$, we have $t_3\in N(x)$  as well as $t_3 \in N(y)$ with $
		d(x, t_3) = d(y, t_3)$.
		This shows that $ x $ and $ y $ are mutually maximally distant in $ G^c(L) \vee K_t $.
		Hence $ x $ is adjacent to $ y $ in $ (G^c(L) \vee K_t)_{SR} $.

		Thus $(G^c(L) \vee K_t)_{SR}=G^c(L)_{SR}+(K_t)_{SR}=G^c(L)_{SR}+K_t$.	
	\end{proof}

	To compute the strong metric dimension of a connected graph $G$, it is enough to find the vertex cover number of $G_{SR}$; see \cite[Theorem 2.1]{galai}. Therefore, this section is devoted to studying the structure of the strong resolving graph of $G^c(L^B)$, where $L^B$ is a blow-up of a Boolean lattice $L\cong \mathbf{2}^n$.

	\begin{theorem}[{Oellermann and Peters-Fransen \cite[Theorem 2.1]{galai}}] \label{gala}
		For any connected graph $G$, $sdim_M(G) = \alpha(G_{SR})$.
	\end{theorem}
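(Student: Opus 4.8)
The plan is to prove the two inequalities $\operatorname{sdim}_M(G) \geq \alpha(G_{SR})$ and $\operatorname{sdim}_M(G) \leq \alpha(G_{SR})$ separately, the common engine being a characterization of which vertices can strongly resolve a mutually maximally distant (MMD) pair. First I would establish the key local lemma: \emph{if $u$ and $v$ are MMD and a vertex $w$ strongly resolves $u,v$, then $w\in\{u,v\}$.} Indeed, if $w\notin\{u,v\}$, then without loss of generality some shortest $u$-$w$ path passes through $v$, so $d(u,w)=d(u,v)+d(v,w)$ with $d(v,w)\geq 1$; the neighbour $v'$ of $v$ on this path towards $w$ then satisfies $d(u,v')=d(u,v)+1>d(u,v)$, contradicting the fact that $v$ is maximally distant from $u$. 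The symmetric case is identical.

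For the lower bound, let $W$ be any strong resolving set of $G$. Each edge of $G_{SR}$ is, by definition, an MMD pair $\{u,v\}\subseteq\partial(G)$, and since $W$ must strongly resolve $u,v$, the key lemma forces $W\cap\{u,v\}\neq\emptyset$. Hence $W\cap V(G_{SR})$ meets every edge of $G_{SR}$, i.e.\ it is a vertex cover, so $|W|\geq |W\cap V(G_{SR})|\geq\alpha(G_{SR})$. Taking $W$ to be minimum yields $\operatorname{sdim}_M(G)\geq\alpha(G_{SR})$.

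For the upper bound I would show that a \emph{minimum} vertex cover $S$ of $G_{SR}$ is already a strong resolving set of $G$. The crucial step, and the one I expect to be the main obstacle, is an \emph{extension lemma}: for any two distinct vertices $u,v$ there exist MMD vertices $x,y$ with $x,u,v,y$ lying, in this order, on a common shortest $x$-$y$ path. To build them, starting from $v$ I repeatedly move to a neighbour strictly farther from $u$; each such step extends a geodesic issuing from $u$ through $v$, and termination gives a vertex $y$ that is maximally distant from $u$ with $v$ on a shortest $u$-$y$ path. Then, starting from $u$, I move away from $y$ to reach a vertex $x$ maximally distant from $y$ with $u$ on a shortest $y$-$x$ path. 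The delicate point is to check that $y$ is \emph{also} maximally distant from $x$: for $w\in N(y)$ the triangle inequality together with $d(u,w)\leq d(u,y)$ gives $d(x,w)\leq d(x,u)+d(u,w)\leq d(x,u)+d(u,y)=d(x,y)$, using that $x,u,y$ are collinear on a geodesic; hence $x$ and $y$ are MMD while $u,v$ still sit on a common shortest $x$-$y$ path.

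Granting this lemma, the proof concludes quickly. Writing $I[a,b]$ for the set of vertices on some shortest $a$-$b$ path, collinearity of $x,u,v,y$ gives $u\in I[x,v]$ and $v\in I[u,y]$, so $x$ strongly resolves $u,v$ (via a shortest $v$-$x$ path through $u$) and $y$ strongly resolves $u,v$ (via a shortest $u$-$y$ path through $v$). As $\{x,y\}$ is an edge of $G_{SR}$, the vertex cover $S$ contains $x$ or $y$, and either choice strongly resolves $u,v$. Thus $S$ strongly resolves every pair, giving $\operatorname{sdim}_M(G)\leq|S|=\alpha(G_{SR})$ and completing the equality. Throughout, I would keep careful track of which endpoint of the reconstructed geodesic remains maximally distant after each extension, since that bookkeeping is exactly where the argument can go wrong.
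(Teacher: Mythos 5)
Your proof is correct. The paper itself gives no proof of this statement---it is quoted from Oellermann and Peters-Fransen \cite{galai}---and your two-part argument (the lemma that only $u$ or $v$ can strongly resolve a mutually maximally distant pair $\{u,v\}$, yielding $\operatorname{sdim}_M(G)\geq\alpha(G_{SR})$, together with the geodesic-extension lemma showing that a minimum vertex cover of $G_{SR}$ strongly resolves every pair, yielding the reverse inequality) is essentially the original proof in that reference. The only implicit hypothesis is finiteness of $G$, which your extension process needs in order to terminate and which is indeed the setting of the cited theorem.
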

	
	\begin{example}\label{example1}
		
		Let $L=C_2\times C_2\times C_2$ and  $G^c(L)$ be the complement of the zero-divisor graph of $L$. From Figure \ref{figure1}, we can easily see that $W=\{(1,0,0),(0,1,0),(0,0,1)\}$ is the only minimum cardinality strong resolving set and hence   $sdim_{M}(G^c(L))=3$. On the other hand, we have $\partial(G^c(L))=V(G^c(L))$ and $\beta(G^c(L)_{SR})=3$. Therefore $\alpha(G^c(L)_{SR})=6-\beta(G^c(L)_{SR})=6-3=3$, this also gives  $sdim_M(G^c(L)) = 3$. Note that the strong metric dimension of a graph $G$, which is  isomorphic to $G^c(L)$ is calculated (i.e., $sdim_{M}(G^c(L))=3$) in Example 2.5 of \cite{abachi}.
		
		\begin{figure}[h]

			\begin{center}
				\begin{tikzpicture}	[scale=1]		
					\begin{scope}[shift={(-1,0)}]
						\draw [fill=black] (0,0) circle (0.05);
						\draw [fill=black] (-1,1) circle (0.05);
						\draw [fill=black] (0,1) circle (0.05);
						\draw [fill=black] (1,1) circle (0.05);
						\draw [fill=black] (-1,2) circle (0.05);
						\draw [fill=black] (0,2) circle (0.05);
						\draw [fill=black] (1,2) circle (0.05);
						\draw [fill=black] (0,3) circle (0.05);

						\draw (0,0) -- (-1,1) -- (-1,2) -- (0,3);
						\draw (0,0) -- (0,1) -- (-1,2) ;
						\draw (0,0) --  (1,1) -- (1,2) -- (0,3);
						\draw (-1,1) -- (0,2) -- (0,3);
						\draw (0,1) -- (1,2);
						\draw (1,1) -- (0,2);
						\node at (0,-0.9) {\tiny  $ L=C_2\times C_2 \times
							C_2$};
						
						\node at (0,-0.3) {\tiny $(0,0,0)$};
						\node at (-1.6,1) {\tiny $(1,0,0)$};
						\node at (-1.6,2) {\tiny $(1,1,0)$};
						\node at (0,3.2) {\tiny $(1,1,1)$};
						\node at (0,1.4) {\tiny $(0,1,0)$};
						\node at (1.6,1) {\tiny $(0,0,1)$};
						\node at (1.6,2) {\tiny $(0,1,1)$};
						\node at (0.5,2.2) {\tiny $(1,0,1)$};

					\end{scope}

					\begin{scope}[shift={(4,0)}]
						\draw [fill=black] (0,0) circle (0.05);
						\draw [fill=black] (-2,0) circle (0.05);
						\draw [fill=black] (2,0) circle (0.05);
						\draw [fill=black] (1,1.5) circle (0.05);
						\draw [fill=black] (-1,1.5) circle (0.05);
						\draw [fill=black] (0,3) circle (0.05);

						\draw (0,0)--(2,0)--(1,1.5)--(0,3)--(-1,1.5)--(-2,0)--(0,0);
						\draw (-1,1.5)--(1,1.5)--(0,0)--(-1,1.5);
						
						\node at (0,-1.0) {\tiny  $G^c(L)$};
						
						\node at (0,-0.3) {\tiny $(1,0,1)$};
						\node at (-2.55,0) {\tiny $(1,0,0)$};
						\node at (-1.55,1.5) {\tiny $(1,1,0)$};
						\node at (0,3.2) {\tiny $(0,1,0)$};
						\node at (1.6,1.5) {\tiny $(0,1,1)$};
						\node at (2.55,0) {\tiny $(0,0,1)$};
						
					\end{scope}
					
				\end{tikzpicture}
				\caption{A lattice $L$ and its $ G^c(L)$}\label{figure1}
			\end{center}
		\end{figure}
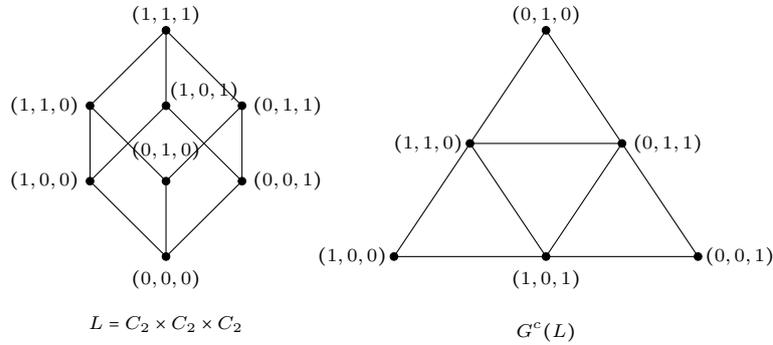
	\end{example}
	
	The blow-up of a graph was first introduced by M. Ye et al. in \cite{mytwl}. The generalized blow-up of a Boolean lattice using finite chains introduced by Gadge and Joshi in \cite{pgvj}.
	
	\begin{definition}[{Gadge and Joshi \cite[Definition 2.11]{pgvj}}]\label{blowup}
		The blow-up $L^B$ of a Boolean lattice $L\cong \mathbf{2}^n$ using chains
		is   obtained as follows:
		\begin{enumerate}
			\item For $1\leq i\leq n$, replace each atom $q_i$ of $L$ by a chain $C_i$ of finite  length, say $m_i-1$, with elements $q_i=x_i^{1},x_i^{2},\dots,x_i^{m}$ such that  $x_{i}^1 \covering x_{i}^2 \covering \dots \covering x_{i}^{m_i}  $. 
			\item Let $ x=\bigvee\limits_{j=1}^{k}q_{i_j}\in L\setminus\{1\}$, where $q_{i_j}$ be atoms of $L$ with $i_j\in\{1,2,\dots, n\}$. \linebreak Replace $x\in L$ by a chain 		
			$C_{i_1i_2 \dots i_k}$ of  finite length, say  $n_{j}-1$,  with elements $x=x_{i_1i_2 \dots i_k}^{1}, ~ x_{i_1i_2 \dots i_k}^{2},~ \dots, x_{i_1i_2 \dots i_k}^{n_j}$ for some $n_{j}\in \mathbb{N}$  such that 
			$x_{i_1i_2 \dots i_k}^{1}\covering x_{i_1i_2 \dots i_k}^{2}\covering \dots \covering x_{i_1i_2 \dots i_k}^{n_{j}}$,
			where $\{i_1,i_2,\dots, i_k\} \subseteq \{1,2,\dots, n\} $.
			\item The elements $0$ and  $1$ of $L$ will be represented by  $\mathbf{0}$ and $\mathbf{1}$ in $L^B$ respectively.

		\end{enumerate}
	\end{definition}
	We will represent the elements of $L^B$ as tuples as follows.\\
	An element $x^{t}_{i_1i_2\dots i_k}$  ($1\leq t \leq n_j$ for some $n_j\in \mathbb{N}$) on the chain $C_{i_1i_2\dots i_k}$ ($\{i_1,i_2,\dots i_k\}\subseteq\{1,2,\dots,n\}$) can be represented by the tuples $(z_{1},z_{2},\dots, z_{n})$ where 
	$$
	z_i=\begin{cases}
		t & \text{if $i\in \{i_1,i_2,\dots , i_k\}$ }\\
		0 & \text{otherwise}.
	\end{cases}
	$$
	
	The blow-up $L^B$ of $L\cong \mathbf{2}^3$ is shown in following Figure \ref{figure3}. 
	
	Define a relation $\sim$ on $L^B$ as $x\sim y$ if and only if $x^\perp=y^\perp$. $\sim$ is an equivalence relation on $L^B$. Let  $[a]$ denotes the equivalence class of $a$ under $\sim$. The set of equivalence classes of $L^B$ will be denoted by $[L^B]$=\{$[a]\mid a \in L^B$\}. It is easy to verify that $[L^B]$ is a lattice under the partial order $[x] \leq [y]$ if and only if $y^\perp \subseteq x^\perp$. Also, in the blow-up lattice $L^B$, if we take any two elements, say $a, b$, on any chain which is obtained by replacing an element in the Boolean lattice $L\cong \mathbf{2}^n$, then $a^\perp = b^\perp$. Hence $[a]=[b]$.

	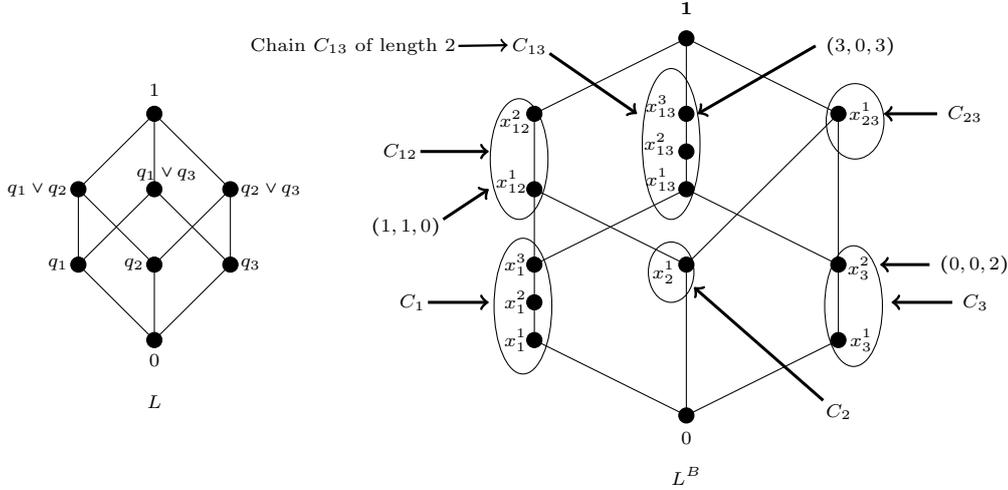
\begin{figure}[h]
		\begin{center}
			\begin{tikzpicture}	[scale=1]		
				
				\begin{scope}[shift={(-2,1)}]
					\draw (0,0) -- (-1,1); \draw (0,0) -- (0,1); \draw (0,0) -- (1,1);
					\draw (-1,1) -- (-1,2); \draw (-1,1) -- (0,2); \draw (0,1) --
					(-1,2); \draw (0,1) -- (1,2); \draw (1,1) -- (1,2); \draw (1,1) --
					(0,2);
					\draw (-1,2) -- (0,3); \draw (0,2) -- (0,3); \draw (1,2) -- (0,3);
					\draw [fill=black] (0,0) circle (.1); \draw [fill=black] (0,1)
					circle (.1);\draw [fill=black] (0,2) circle (.1);\draw
					[fill=black] (0,3) circle (.1);
					\draw [fill=black] (-1,1) circle (.1); \draw [fill=black] (-1,2)
					circle (.1);
					\draw [fill=black] (1,1) circle (.1);\draw [fill=black] (1,2)
					circle (.1);
					\node [below] at (0,-.05) {\tiny$0$};\node [left] at (-1,1)
					{\tiny$q_1$};\node [right] at (1,1) {\tiny$q_3$}; \node [left] at (0,1)
					{\tiny$q_2$};
					\node [left] at (-1,2) {\tiny$q_1\vee q_2$}; \node [left] at (0.7,2.2) {\tiny$q_1\vee q_3$};
					\node [right] at (1,2) {\tiny$q_2\vee q_3$};
					\node [above] at (0,3.1) {\tiny$1$};
					\node[below] at (0,-0.6) {\tiny$L$};
					
				\end{scope}
				
				\begin{scope}[shift={(5,0)}]
					
					\draw [fill=black] (0,0) circle (.1);
					\draw [fill=black] (-2,1) circle (.1);
					
					\draw [fill=black] (-2,2) circle (.1);
					
					\draw [fill=black] (-2,3) circle (.1);
					
					\draw [fill=black] (-2,4) circle (.1);

					\draw [fill=black] (0,2) circle (.1);
					\draw [fill=black] (0,4) circle (.1);
					\draw [fill=black] (-2,1.5) circle (.1);
					\draw [fill=black] (0,3.5) circle (.1);
					\draw [fill=black] (0,3) circle (.1);
					
					\draw [fill=black] (2,2) circle (.1);
					\draw [fill=black] (2,4) circle (.1);
					\draw [fill=black] (2,1) circle (.1);
					\draw [fill=black] (0,5) circle (.1);
					
					\draw (0,0) -- (-2,1) --  (-2,2) -- (-2,3) -- (-2,4)--(0,5)-- (0,4) --(0,3) --(-2,2);
					\draw (0,0) -- (0,2) --(-2,3);
					\draw (0,0) -- (2,1)--(2,2) --(2,4) --(0,5);
					\draw (0,2) -- (2,4);
					\draw (0,3) -- (2,2);

					\draw (-2.15,1.45) ellipse (0.38 and 0.9); \draw (-2.2,3.4) ellipse (0.38 and 0.8); \draw (2.2,1.45) ellipse (0.38 and 0.8); \draw (-0.2,1.9) ellipse (0.3 and 0.4); \draw (-0.2,3.6) ellipse (0.38 and 1);\draw (2.22,3.9) ellipse (0.38 and 0.5);
					
					\draw [line width=0.40mm,->] (-3.4,1.5)--(-2.6,1.5);
					\node [left] at (-3.3,1.5) {\tiny $C_{1}$};
					
					\draw [line width=0.40mm,->] (-3.5,3.5)--(-2.6,3.5);
					\node [left] at (-3.4,3.5) {\tiny $C_{12}$};
					
					\draw [line width=0.40mm,<-] (2.7,1.5)--(3.5,1.5);
					\node [right] at (3.5,1.5) {\tiny $C_{3}$};
					
					\draw [line width=0.40mm,<-] (2.6,4)--(3.3,4);
					\node [right] at (3.3,4) { \tiny$C_{23}$};
					
					\draw [line width=0.40mm,<-] (-0.65,4)--(-1.8,4.8);
					\node [left] at (-1.7,4.9) {\tiny$C_{13}$};
					
					\draw [line width=0.30mm,->] (-3,4.9)--(-2.35,4.91);
					\node [left] at (-2.9,4.9) {\tiny Chain $C_{13}$ of length $2$};

					\draw [line width=0.40mm,<-] (0.1,1.7)--(1.8,0.2);
					\node [right] at (1.7,0.05) {\tiny $C_{2}$};
					
					\draw [line width=0.40mm,<-] (2.55,2)--(3.2,2);
					\node [right] at (3.2,2) { \tiny$(0,0,2)$};

					\draw [line width=0.40mm,<-] (0.15,4)--(1.7,4.9);
					\node [right] at (1.7,4.9) { \tiny$(3,0,3)$};
					
					\draw [line width=0.40mm,->] (-3.2,2.6)--(-2.6,3);
					\node [left] at (-3.1,2.5) { \tiny$(1,1,0)$};

					\node at (-2.25,1) {\tiny$ x_1 ^{1}$};
					\node at (-2.25,1.5) {\tiny$ x_1 ^{2}$};
					\node at (-2.25,2) {\tiny$ x_1 ^{3}$};
					\node at (-0.3,1.9) {\tiny$ x_2 ^{1}$};
					\node at (2.27,1.95) {\tiny$ x_3 ^{2}$};
					\node at (-2.3,3.1) {\tiny$ x_{12} ^{1}$};
					\node at (-2.25,3.9) {\tiny$ x_{12} ^{2}$};
					\node at (-0.33,4.1) {\tiny$ x_{13} ^{3}$};
					\node at (-0.35,3.6) {\tiny$ x_{13} ^{2}$};
					\node at (-0.33,3.13) {\tiny$ x_{13} ^{1}$};
					\node at (2.35,4) {\tiny$ x_{23} ^{1}$};
					\node at (2.3,1) {\tiny$ x_3 ^{1}$};
					
					\node at (0,-0.8) {\tiny$ L^B$};

					\node at (0,-0.3) {\tiny 0};
					\node at (0,5.4) {\tiny $\mathbf{1}$};
				\end{scope}

			\end{tikzpicture}
			\caption{Boolean lattice $L\cong \mathbf{2}^3$ and its blow-up $L^B$ }\label{figure3}
		\end{center}
	\end{figure}

	\textbf{Throughout this paper, $L^B$ denotes the blow-up of a Boolean lattice $L\cong \mathbf{2}^n $ with $n\geq 3$.}
	\begin{remark}[{Gadge and Joshi \cite[Remark 2.13]{pgvj}}]\label{pcbool}
		Note that if $a, b \in L$ ($a\not=b$), where $L$ is Boolean and $C_a$ and $C_b$ are the corresponding chains in $L^B$, then $a\wedge b =x \wedge y $ and $a\vee b =x \vee y $ in $L^B$  for any element $x$ on the chain $C_a$ and any element $y$ on the chain $C_b$. Hence, in particular, if $x\in L^B$ and $x^*$ be the pseudocomplement of $x$ in $L^B$, then in $L^B$, we have $x \vee x^*=1$ and $x \wedge x^* =0$. Note that $x^*$ need not be the unique complement of $x$ in $L^B$, whereas $x^*$ is the unique complement of $x$ in $L$. Also, in $L^B$, the pseudocomplement of atom $q_i$ is the dual atom of $L^B$, denoted by $q_i^*$, and the pseudocomplement of dual atom $q_i^*$ is the largest element in the chain of $[q_i]$. 
	\end{remark}

	\begin{corollary}[{Gadge and Joshi \cite[Corollary 2.15]{pgvj}}]\label{pseudo}
		
		Let $L^B$ be a blow-up of a Boolean lattice $L\cong \mathbf{2}^n$. Then the following statements  hold: \begin{enumerate}
			\item $L^B$ and its dual lattice $(L^B)^{\partial}$ both are pseudocomplemented.
			\item $[L^B]\cong [(L^B)^\partial]\cong L\cong\mathbf{2}^n$.
		\end{enumerate}

	\end{corollary}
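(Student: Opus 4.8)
The plan is to reduce both parts to the single structural identity of Remark~\ref{pcbool}, namely that meets and joins across distinct chains of $L^B$ agree with the corresponding meets and joins in the Boolean lattice $L$, together with the fact that every element $a \in L \cong \mathbf{2}^n$ has a unique Boolean complement $a^*$ satisfying $a \wedge c = 0 \iff c \leq a^*$.

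For part (2), I would first pin down the $\sim$-classes. The discussion preceding Remark~\ref{pcbool} already gives that any two elements lying on a common chain $C_a$ have equal annihilators, so each chain is contained in a single class (with $\{0\}$ and $\{1\}$ as the two degenerate classes). To see that distinct chains give distinct classes, I would compute, for $x$ on $C_a$, the annihilator $x^\perp = \{0\} \cup \bigcup_{a \wedge c = 0} C_c$ using Remark~\ref{pcbool}; since $a \wedge c = 0 \iff c \le a^*$ in the Boolean lattice, the chains appearing in $x^\perp$ are exactly those $C_c$ with $c \le a^*$, so $x^\perp = y^\perp$ forces $a^* = b^*$, i.e. $a = b$. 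This identifies $[L^B]$ set-theoretically with $L$ via $\phi([x]) = a$ for $x \in C_a$. Finally I would check $\phi$ is an order-isomorphism: by definition $[x] \le [y] \iff y^\perp \subseteq x^\perp$, and by the annihilator computation this is equivalent to $\downarrow b^* \subseteq \downarrow a^*$, i.e. $b^* \le a^*$, i.e. $a \le b$ in $L$. Thus $[L^B] \cong L \cong \mathbf{2}^n$. Since $L \cong \mathbf{2}^n$ is self-dual and $(L^B)^\partial$ is itself the blow-up of $L^\partial \cong \mathbf{2}^n$ (each chain merely reversed), the identical argument applied to $(L^B)^\partial$ yields $[(L^B)^\partial] \cong L^\partial \cong \mathbf{2}^n$.

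For part (1), I would exhibit the pseudocomplement explicitly. Fix $x$ on $C_a$ and let $w$ denote the top element of the chain $C_{a^*}$. By Remark~\ref{pcbool}, $x \wedge w = a \wedge a^* = 0$, so $w \in x^\perp$. The annihilator computation from part (2) shows that $x^\perp$ is precisely the down-set of $w$: every $z \in x^\perp$ lies on some $C_c$ with $c \le a^*$ and is therefore $\le w$, while conversely everything below $w$ lies on such a chain and hence meets $x$ in $0$. Consequently $w$ is the largest element of $x^\perp$, i.e. $w = x^*$, and every element of $L^B$ has a pseudocomplement (the cases $x = 0, 1$ being the degenerate instances $0^* = 1$ and $1^* = 0$). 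The same reasoning in the dual, using the join identity $a \vee b = x \vee y$ of Remark~\ref{pcbool} in place of the meet identity, shows $(L^B)^\partial$ is pseudocomplemented; equivalently, one reuses the fact that $(L^B)^\partial$ is again a blow-up of a Boolean lattice and applies the $L^B$ case verbatim.

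The step I expect to be the main obstacle is establishing that $x^\perp$ is exactly the principal ideal $\downarrow w$ in $L^B$, since both inclusions hinge on the cross-chain order of the blow-up faithfully mirroring the order of $L$ (namely, that $z \le w$ in $L^B$ iff $z$ lies on a chain $C_c$ with $c \le a^*$ in $L$). This is implicit in Definition~\ref{blowup} and illustrated in Figure~\ref{figure3}, but it must be made precise; once it is in hand, both parts of the corollary follow by routine verification.
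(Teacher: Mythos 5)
The paper itself offers no proof of this corollary: it is imported verbatim from \cite[Corollary 2.15]{pgvj}, so there is no in-paper argument to compare yours against. Judged on its own merits, your proof is correct and self-contained. The annihilator computation $x^\perp = \{0\} \cup \bigcup_{\{c \,:\, c \wedge a = 0\}} C_c$ for $x \in C_a$ is exactly what the blow-up construction yields (elements on a common chain are nonzero and comparable, hence have nonzero meet, while elements on distinct chains meet according to the Boolean meet by Remark \ref{pcbool}), and from it both conclusions follow as you describe: the class map $[x] \mapsto a$ is an order isomorphism onto $\mathbf{2}^n$ because $y^\perp \subseteq x^\perp$ unwinds to $b^* \le a^*$, i.e.\ $a \le b$; and the top element $w$ of the chain $C_{a^*}$ satisfies $x^\perp = \{z \in L^B \mid z \le w\}$, which is precisely the statement that $w$ is the pseudocomplement of $x$. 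Your explicit formula also agrees with the assertions recorded in Remark \ref{pcbool} (the pseudocomplement of an atom is a dual atom of $L^B$, and the pseudocomplement of a dual atom is the largest element of the chain of $[q_i]$), which is a useful consistency check. The one point you rightly flag --- that $z \le w$ in $L^B$ iff $z$ lies on a chain $C_c$ with $c \le a^*$ --- is indeed the only substantive verification: it requires making explicit that the order between distinct chains of the blow-up mirrors the order of $L$ (all of $C_c$ lies below all of $C_d$ iff $c < d$ in $L$, with incomparability otherwise), which Definition \ref{blowup} leaves implicit but which is forced by the construction. Your treatment of the dual, via the observation that $(L^B)^\partial$ is itself isomorphic to a blow-up of $L^\partial \cong \mathbf{2}^n$ with each chain reversed, is clean and lets you avoid repeating the argument with joins in place of meets.
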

	\begin{remark}\label{rem4.6}
		Since every pseudocomplemented lattice is 0-distributive and the converse is true if a lattice is finite, we have, by Corollary \ref{pseudo} and by Definition \ref{blowup}, $L^B$ is an atomic $0$-distributive lattice. Hence by Theorem \ref{star}, the $diam(G^c(L^B))=2$.
	\end{remark}
	The following result \cite[Theorem 2.16]{pgvj} gives the relation between zero-divisor graph of a $0-$distributive lattice and blow-up of a Boolean lattice $L\cong \mathbf{2}^n$. 
	\begin{theorem}[{Gadge and Joshi \cite[Theorem 2.16]{pgvj}}]\label{0-disblowup}
		Let $L'$ be a finite 0-distributive lattice with $n$ atoms. Let $L^B$ be the blow-up of the Boolean lattice $L=\mathbf{2}^n$. Then $G(L')=G(L^B)$.
	\end{theorem}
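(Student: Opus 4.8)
The plan is to reduce the statement to a comparison of annihilators, since in both lattices the adjacency relation of the zero-divisor graph is governed entirely by the relation $a \wedge b = 0$, i.e.\ by membership $b \in a^\perp$. First I would record the elementary fact that in a finite (hence atomic) lattice, $x \wedge z = 0$ if and only if no atom lies below both $x$ and $z$; writing $A(x)$ for the set of atoms $\leq x$, this says $x\wedge z=0$ iff $A(x)\cap A(z)=\emptyset$. Applying this with $z$ ranging over the atoms shows that for any two elements $x,y$ one has $x^\perp=y^\perp$ if and only if $A(x)=A(y)$. Thus the equivalence $\sim$ defined in the text ($x\sim y \iff x^\perp=y^\perp$) partitions each of $L'$ and $L^B$ into classes indexed by the realizable atom-sets, two nonzero elements in the same class are never adjacent (since $x\notin x^\perp$ for $x\neq 0$), and adjacency between classes depends only on disjointness of the corresponding atom-sets.

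The crux is to identify exactly which atom-sets occur and to see that the class structure is the same in $L'$ and in $L^B$. Here $0$-distributivity is essential (and is exactly what rules out pathologies such as $M_3$): if $b$ is an atom with $b \leq a_{i_1}\vee\cdots\vee a_{i_k}$ but $b\notin\{a_{i_1},\dots,a_{i_k}\}$, then $b\wedge a_{i_j}=0$ for each $j$, so by (finite) $0$-distributivity $b\wedge(a_{i_1}\vee\cdots\vee a_{i_k})=0$, contradicting $b\leq a_{i_1}\vee\cdots\vee a_{i_k}$. Hence in any finite $0$-distributive lattice the join of a set $S$ of atoms lies above exactly the atoms in $S$, so every subset $S$ of the $n$ atoms is realized as an atom-set. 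Combined with the previous paragraph, the $\sim$-classes of $L'$ are indexed precisely by the $2^n$ subsets of the atoms, and an element is a nonzero zero-divisor iff its atom-set is a nonempty proper subset (the elements whose atom-set is the full set, together with $0$, are exactly the non-zero-divisors and drop out of the graph). The same analysis applies to $L^B$: by Remark \ref{pcbool} meets and joins are computed as in $L\cong\mathbf{2}^n$, so an element on the chain $C_{i_1\cdots i_k}$ has atom-set $\{q_{i_1},\dots,q_{i_k}\}$, each chain forms a single $\sim$-class, and by Corollary \ref{pseudo} the class poset is again $\mathbf{2}^n$.

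With this in hand both graphs are seen to be the same graph-theoretic blow-up. Let $\mathcal H$ be the graph on the nonempty proper subsets of $\{1,\dots,n\}$ with $S$ adjacent to $T$ iff $S\cap T=\emptyset$, and replace each vertex $S$ by an independent set of mutually false twins whose size is the cardinality $c_S$ of the $\sim$-class with atom-set $S$, edges between blocks $B_S,B_T$ being present exactly when $S\cap T=\emptyset$. In $L'$ the block size is $c_S=|\{x\in L':A(x)=S\}|$, while in $L^B$ it is the number of elements on the chain $C_S$. I would therefore take $L^B$ to be the blow-up in which the chain $C_S$ has exactly $c_S$ elements for each nonempty proper $S$ (this is precisely the freedom allowed in Definition \ref{blowup}); then the map sending $B_S$ in $L'$ to the chain $C_S$ in $L^B$ (matching blocks of equal size and respecting the index $S$) is a graph isomorphism, because adjacency on both sides reduces to the single rule $S\cap T=\emptyset$. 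This yields $G(L')\cong G(L^B)$. The hard part will be the structural lemma of the middle paragraph---pinning down the $\sim$-classes by atom-sets and using $0$-distributivity to guarantee that all and only the subset-indexed classes occur---together with the bookkeeping that makes the chain lengths of $L^B$ agree with the class sizes of $L'$; once the classes and their sizes match, the isomorphism is essentially formal.
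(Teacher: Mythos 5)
Your proposal is correct and takes essentially the approach underlying the paper's own framework (the paper itself only quotes this result from \cite{pgvj}): decompose both $G(L')$ and $G(L^B)$ into independent blocks given by the equivalence $x\sim y$ if and only if $x^\perp=y^\perp$, identify blocks with atom-sets (where $0$-distributivity, extended to finite joins by induction, guarantees that the join of any set of atoms lies above exactly those atoms, so all $2^n-2$ relevant classes occur and classes correspond to nonempty proper subsets), and match block sizes with the chain lengths of the blow-up. The key steps --- the characterization $x\wedge z=0$ exactly when $A(x)\cap A(z)=\emptyset$ in a finite atomic lattice, the fact that adjacency depends only on disjointness of atom-sets, and the choice of chain lengths $c_S$ as the intended (necessarily existential) reading of the theorem --- are all sound.
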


	Let $x$,$y\in V(G^c(L^B))$ with $x\sim y$. Then $a\wedge b\neq 0$, i.e., $a$ and $b$ are adjacent in $G^c(L^B)$ for every $a\in [x]$ and $b\in [y]$. Note that if $x$ and $y$ are vertices of $G^c(L^B)$ with $[x]=[y]$, then $N(x)=N(y)$ in $G^c(L^B)$.

	The following result establishes the relation between $G^c(L^B)$ and $G^c(L^B)_{SR}$.
	
	\begin{lemma}\label{compblow}
		Suppose that $L^B$ is a blow-up of a Boolean lattice $L\cong \mathbf{2}^n$. Then the following statements hold:
		\begin{enumerate}
			\item $V(G^c(L^B)_{SR})=V(G^c(L^B))$.
			
			\item For every $x$, $y\in V(G^c(L^B)_{SR})$, if $[x]=[y]$, then $x$ is adjacent to $y$ in $G^c(L^B)_{SR}$.
			\item  For every $x$, $y\in V(G^c(L^B)_{SR})$, if $[x]\neq[y]$, then $x$ is adjacent to $y$ in $G^c(L^B)_{SR}$ if and only if $x$ is not adjacent to $y$ in $G^c(L^B)$.
			
		\end{enumerate}
	\end{lemma}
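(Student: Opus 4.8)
The plan is to handle the three parts in turn, exploiting throughout that $\mathrm{diam}(G^c(L^B))=2$ (Remark \ref{rem4.6}), so that for any two vertices $u,v$ we have $d(u,v)\in\{1,2\}$, with $d(u,v)=2$ exactly when $u\wedge v=0$ (equivalently $u\in v^\perp$), and a distance equal to the diameter is automatically the largest possible. I record the translation I will use repeatedly: $w$ is a neighbour of $x$ in $G^c(L^B)$ iff $w\neq x$ and $w\wedge x\neq 0$, i.e.\ $w\notin x^\perp$.

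For part (1) I would show every vertex lies in $\partial(G^c(L^B))$. Given $x\in V(G^c(L^B))=Z^*(L^B)$, the definition of a nonzero zero-divisor supplies $y\neq 0$ with $x\wedge y=0$; then $y\in Z^*(L^B)$ as well and $d(x,y)=2$. Since $2$ is the diameter, $d(y,w)\leq 2=d(x,y)$ for every $w\in N(x)$ and $d(x,w)\leq 2=d(x,y)$ for every $w\in N(y)$, so $x$ and $y$ are mutually maximally distant. Hence $x\in\partial(G^c(L^B))$, giving $V(G^c(L^B)_{SR})=V(G^c(L^B))$.

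For part (2), suppose $[x]=[y]$, i.e.\ $x^\perp=y^\perp$. This forces $N[x]=N[y]$: indeed $z\wedge x\neq 0\iff z\notin x^\perp\iff z\notin y^\perp\iff z\wedge y\neq 0$, so $x$ and $y$ are true twins; moreover $x\notin x^\perp=y^\perp$ gives $x\wedge y\neq 0$, hence $d(x,y)=1$. To see the pair is mutually maximally distant, take any $w\in N(x)$; then $w\in N[x]=N[y]$, so either $w=y$ (distance $0$) or $w\in N(y)$ (distance $1$), whence $d(y,w)\leq 1=d(x,y)$. Thus $x$ is maximally distant from $y$, and by symmetry $y$ from $x$, so $x$ is adjacent to $y$ in $G^c(L^B)_{SR}$.

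For part (3), assume $[x]\neq[y]$, i.e.\ $x^\perp\neq y^\perp$, and split on whether $x\wedge y=0$. If $x\wedge y=0$ then $d(x,y)=2$, and exactly as in part (1) the diameter bound makes $x,y$ mutually maximally distant, so they are adjacent in $G^c(L^B)_{SR}$. The converse is the hard direction: if $x\wedge y\neq 0$, so $d(x,y)=1$, I must show $x,y$ are \emph{not} mutually maximally distant. Here I would characterise failure of the maximally-distant condition in annihilator terms: $x$ fails to be maximally distant from $y$ precisely when some $w\in N(x)$ has $d(y,w)=2$, i.e.\ when there is a (necessarily nonzero, hence a genuine vertex) $w$ with $w\wedge x\neq 0$ and $w\wedge y=0$ --- that is, when $y^\perp\not\subseteq x^\perp$; symmetrically $y$ fails to be maximally distant from $x$ exactly when $x^\perp\not\subseteq y^\perp$. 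Since $x^\perp\neq y^\perp$, at least one containment fails, so at least one maximally-distant condition fails and the pair is not mutually maximally distant. Combining the two cases yields the stated equivalence. The step I expect to be the real obstacle is this annihilator characterisation of maximal distance for adjacent vertices, since it is exactly what ties the whole adjacency rule of $G^c(L^B)_{SR}$ to the classes $[\,\cdot\,]$, and it is the only place where the hypothesis $d(x,y)=1$ is used in an essential way.
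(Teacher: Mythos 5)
Your proof is correct. Parts (1), (2) and the forward implication of (3) coincide with the paper's argument: the diameter-$2$ bound (Theorem \ref{star}) makes any pair at distance $2$ mutually maximally distant, and equal annihilators give equal neighbourhoods (your use of closed neighbourhoods $N[x]=N[y]$ is in fact slightly more careful than the paper's statement $N(x)=N(y)$, which for adjacent twins is not literally true). The genuine difference is in the converse direction of (3), the step you correctly flagged as the crux. The paper argues via pseudocomplements: assuming $a\wedge b\neq 0$ and mutual maximal distance, it notes that $a^{\ast}\wedge b=0$ together with $a\wedge b^{\ast}=0$ would force $a^{\ast}=b^{\ast}$ and hence $[a]=[b]$; so without loss of generality $a^{\ast}\wedge b\neq 0$, and then $a^{\ast}$ is a neighbour of $b$ at distance $2$ from $a$, contradicting mutual maximal distance. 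Your argument replaces the canonical witness $a^{\ast}$ by an arbitrary element separating the two annihilators: since $[x]\neq[y]$ means exactly $x^{\perp}\neq y^{\perp}$, one of the containments $x^{\perp}\subseteq y^{\perp}$, $y^{\perp}\subseteq x^{\perp}$ must fail, and any element of the nonempty difference is a neighbour of one vertex lying at distance $2$ from the other. This is the same contradiction, but obtained more elementarily: it does not use that $L^B$ is pseudocomplemented (Corollary \ref{pseudo}), and it isolates a clean standalone fact --- for adjacent vertices $x,y$, the vertex $x$ is maximally distant from $y$ if and only if $y^{\perp}\subseteq x^{\perp}$, so the pair is mutually maximally distant if and only if $[x]=[y]$ --- which explains directly why adjacency in $G^c(L^B)_{SR}$ is governed by the classes $[\,\cdot\,]$, rather than deducing it indirectly by contradiction.
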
	
	\begin{proof}
		\begin{enumerate}
			
			\item It is sufficient to show that $V(G^c(L^B))\subseteq V(G^c(L^B)_{SR})$. Assume that $a\in V(G^c(L^B))$. Then there exists some $b\in V(G^c(L^B))=V(G(L^B))$ such that $a\wedge b =0$. This means that $d_{G^c(L^B)}(a,b)=2=diam (G^c(L^B))$ by Theorem \ref{star}. This implies that   $a$ is mutually maximally distant from $b$. Thus $a\in V(G^c(L^B)_{SR})$.
			
			\item Let $a,b\in V(G^c(L^B)_{SR})$ with $[a] =[b] $. Then $N_{G^c(L^B)}(a)=N_{G^c(L^B)}(b)$.  Hence $a$ and $b$ are mutually maximally distant, that is,  $a$ is adjacent to $b$ in $G^c(L^B)_{SR}$.
			
			\item Let $a,b\in V(G^c(L^B)_{SR})$ and $[a] \neq [b]$. Suppose  $a$ and $b$ are not adjacent  in $G^c(L^B)$, i.e., $a\wedge b=0$. Then by Theorem \ref{star}, $d_{G^c(L^B)}(a,b)=2=diam(G^c(L^B))$. Hence $a$ and $b$ are mutually maximally distant, that is,  $a$ is adjacent to $b$ in $G^c(L^B)_{SR}$.
			
			To prove the converse, we assume contrary that $a$ and $b$ are mutually maximally distant and $a$ is adjacent to $b$ in $G^c(L^B)$ for some $a, b\in V(G^c(L^B))$. Hence $a\wedge b \neq 0$ and hence $d_{G^c(L^B)}(a,b)=1$.
			
			We claim that either $a^\ast \wedge b \neq 0$ or $a\wedge b^\ast \neq 0$. Assume, on the contrary, that  $a^\ast \wedge b = 0$ and  $a \wedge b^\ast =0$. Then $a^\ast \leq b^\ast$, $b^\ast \leq a^\ast$, and hence $[a]=[b]$, a contradiction. Hence without loss of generality, assume that $a^\ast \wedge b \neq 0$, i.e., $a^\ast \in N_{G^c(L^B)}(b)$. Thus 
			$d_{G^c(L^B)}(a^\ast,a)=2>1=d_{G^c(L^B)}(a,b)$, a contradiction to  $a$ and $b$ are mutually maximally distant in $G^c(L^B)$. 			
			Hence $a$ is adjacent to $b$ in $G^c(L^B)$.
		\end{enumerate}
	\end{proof}

	In view of Lemma \ref{compblow}, we have:
	
	\begin{corollary}\label{corol1}
		Let $L^B$ be a blow-up of a Boolean lattice $L\cong \mathbf{2}^n$. If $L^B\cong L\cong \mathbf{2}^n$, then $G^c(L^B)_{SR}=G(L^B)$.
	\end{corollary}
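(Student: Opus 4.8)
The plan is to reduce the whole statement to Lemma \ref{compblow} by first observing that the hypothesis $L^B \cong L \cong \mathbf{2}^n$ forces the blow-up to be trivial, so that the equivalence relation $\sim$ collapses to equality. First I would note that $L^B \cong \mathbf{2}^n$ means every chain $C_{i_1 i_2 \cdots i_k}$ introduced in Definition \ref{blowup} has length $0$; that is, each element of $L$ is replaced by a single element, and hence $L^B$ is (isomorphic to) the Boolean lattice itself.

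The key step is to show that in a Boolean lattice distinct elements have distinct annihilators, so that each $\sim$-class $[x]$ is a singleton. Indeed, in $\mathbf{2}^n$ every element $x$ has a unique complement $x^{\ast}$, and $x^{\perp}$ equals the principal ideal $\{z : z \leq x^{\ast}\}$ determined by $x^{\ast}$. Hence $x^{\perp} = y^{\perp}$ forces $x^{\ast} = y^{\ast}$, and since complementation is an involution (in particular injective) on $\mathbf{2}^n$, this gives $x = y$. Consequently, for any two distinct vertices $x, y \in V(G^c(L^B))$ we always have $[x] \neq [y]$, so that part (2) of Lemma \ref{compblow} is vacuous for distinct vertices and only part (3) is operative.

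With this in hand the corollary follows directly. By Lemma \ref{compblow}(1), $V(G^c(L^B)_{SR}) = V(G^c(L^B)) = Z^{\ast}(L^B) = V(G(L^B))$, so the two graphs share the same vertex set. For the edges, take distinct $x, y$; since $[x] \neq [y]$, Lemma \ref{compblow}(3) says that $x$ is adjacent to $y$ in $G^c(L^B)_{SR}$ if and only if $x$ is not adjacent to $y$ in $G^c(L^B)$. But $G^c(L^B)$ is the complement of $G(L^B)$ on the same vertex set, so $x$ is not adjacent to $y$ in $G^c(L^B)$ precisely when $x$ is adjacent to $y$ in $G(L^B)$, i.e., when $x \wedge y = 0$. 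Therefore the edge sets of $G^c(L^B)_{SR}$ and $G(L^B)$ coincide, and $G^c(L^B)_{SR} = G(L^B)$.

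The only genuine content — the step I expect to be the crux — is the singleton-class observation: it is exactly this that eliminates the ``$[x]=[y]$'' possibility of Lemma \ref{compblow}(2) and lets the complementation relation of part (3) translate cleanly into the adjacency relation of $G(L^B)$. Everything else is a direct transcription of the three parts of Lemma \ref{compblow}.
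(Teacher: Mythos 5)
Your proof is correct and follows exactly the route the paper intends: the paper states the corollary with no more justification than ``In view of Lemma \ref{compblow}'', and your argument is precisely the fleshed-out version of that, with the key (and genuinely necessary) observation that in $\mathbf{2}^n$ annihilators are principal ideals generated by complements, so all $\sim$-classes are singletons, part (2) of the lemma is vacuous, and part (3) turns non-adjacency in $G^c(L^B)$ into adjacency in $G(L^B)$.
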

	
	In the following Figure \ref{figure5}, $G^c(L)$ is the complement of the zero-divisor graph of a lattice given in Figure \ref{figure1}. Figure \ref{figure5} illustrates   Lemma \ref{compblow} as well as  Corollary \ref{corol1}. 
	
	\begin{figure}[h]
		
		\begin{center}
			\begin{tikzpicture}	[scale=1]		
				\begin{scope}[shift={(-1,0)}]
					\draw [fill=black] (0,0) circle (0.05);
					\draw [fill=black] (-2,0) circle (0.05);
					\draw [fill=black] (2,0) circle (0.05);
					\draw [fill=black] (1,1.5) circle (0.05);
					\draw [fill=black] (-1,1.5) circle (0.05);
					\draw [fill=black] (0,3) circle (0.05);

					\draw (0,0)--(2,0)--(1,1.5)--(0,3)--(-1,1.5)--(-2,0)--(0,0);
					\draw (-1,1.5)--(1,1.5)--(0,0)--(-1,1.5);
					
					\node at (0,-1.0) {\tiny  $G^c(L)$};
					
					\node at (0,-0.3) {\tiny $(1,0,1)$};
					\node at (-2.55,0) {\tiny $(1,0,0)$};
					\node at (-1.65,1.5) {\tiny $(1,1,0)$};
					\node at (0,3.2) {\tiny $(0,1,0)$};
					\node at (1.6,1.5) {\tiny $(0,1,1)$};
					\node at (2.55,0) {\tiny $(0,0,1)$};
					
				\end{scope}
				
				\begin{scope}[shift={(1.5,0)}]
					
					\draw [fill=black] (4,3) circle (0.05);
					\draw [fill=black] (5,0.5) circle (0.05);
					\draw [fill=black] (3,0.5) circle (0.05);
					\draw [fill=black] (4,2) circle (0.05);
					\draw [fill=black] (6,0) circle (0.05);
					\draw [fill=black] (2,0) circle (0.05);
					\draw (3,0.5) -- (4,2) -- (4,2)  -- (5,0.5);
					\draw (2,0)--(3,0.5);
					\draw (3,0.5) -- (5,0.5);
					\draw (6,0)--(5,0.5);
					\draw (4,3)--(4,2);
					\node at (3.3,0.15) {\tiny $(1,0,0)$};
					\node at (4,3.2) {\tiny $(1,0,1)$};
					\node at (5.65,0.6) {\tiny $(0,0,1)$};
					\node at (6.6,-0.15
					) {\tiny $(1,1,0)$};
					\node at (2,-0.25) {\tiny $(0,1,1)$};
					\node at (3.45,2.1) {\tiny $(0,1,0)$};
					\node at (4,-0.9) {\tiny $G^c(L)_{SR}\cong G(L)$};
				\end{scope}
			\end{tikzpicture}
			\caption{$G^c(L)$ and its $G^c(L)_{SR}\cong G(L)$ }\label{figure5}
		\end{center}
	\end{figure}
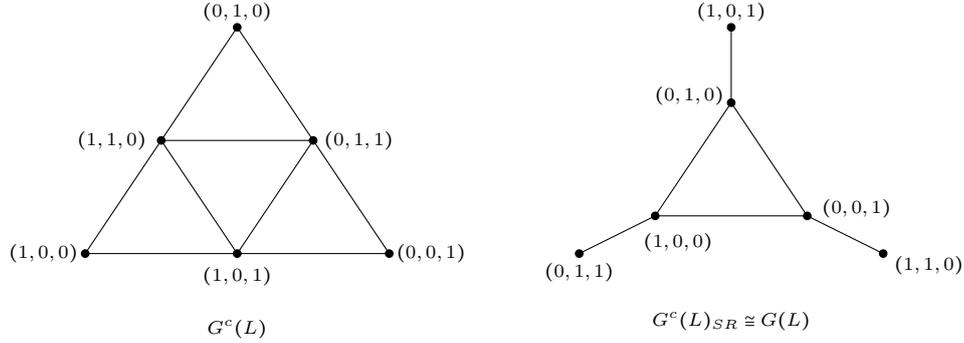
	
	\begin{lemma}\label{comconnect}
		Let $L^B$ be a blow-up of a Boolean lattice $L\cong \mathbf{2}^n$. Then $G^c(L^B)_{SR}$ is a connected graph with $diam(G^c(L^B)_{SR})$ is at most $3$. 
	\end{lemma}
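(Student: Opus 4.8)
The plan is to first convert Lemma~\ref{compblow} into an explicit adjacency rule for $G^c(L^B)_{SR}$ and then to reduce the distance computation to a purely combinatorial path-finding problem in the Boolean lattice $\mathbf{2}^n$. By parts (2) and (3) of Lemma~\ref{compblow}, two distinct vertices $u,v$ are adjacent in $G^c(L^B)_{SR}$ precisely when either $[u]=[v]$, or $[u]\neq[v]$ and $u\wedge v=0$. Using Corollary~\ref{pseudo}(2), the classes $[x]$ with $x\in Z^*(L^B)$ are in bijection with the nonempty proper subsets of $[n]=\{1,2,\dots,n\}$; I would write $S_x\subseteq[n]$ for the subset attached to $[x]$. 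By Remark~\ref{pcbool} meets in $L^B$ are computed in $L\cong\mathbf{2}^n$, so $u\wedge v=0$ if and only if $S_u\cap S_v=\emptyset$. Since every nonempty proper subset is realised by some class and every class is nonempty, I may prescribe intermediate vertices simply by naming their subsets.

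With this dictionary in hand I would fix two vertices $u,v$ and argue by cases. If $[u]=[v]$ or $S_u\cap S_v=\emptyset$ they are already adjacent, so $d(u,v)=1$. Otherwise $[u]\neq[v]$ and $S_u\cap S_v\neq\emptyset$, and I split according to whether $S_u\cup S_v=[n]$. If $S_u\cup S_v\neq[n]$, choose $k\in[n]\setminus(S_u\cup S_v)$ and a vertex $w$ with $S_w=\{k\}$; then $S_w$ meets neither $S_u$ nor $S_v$, so $u-w-v$ is a path and $d(u,v)\le 2$. If $S_u\cup S_v=[n]$, choose vertices $w_1,w_2$ with $S_{w_1}=[n]\setminus S_u$ and $S_{w_2}=[n]\setminus S_v$, both nonempty and proper since $S_u,S_v$ are; the three meets $S_u\cap S_{w_1}$, $S_{w_1}\cap S_{w_2}=(S_u\cup S_v)^c$, and $S_{w_2}\cap S_v$ are all empty, so $u-w_1-w_2-v$ is a walk of length $3$ and $d(u,v)\le 3$. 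One checks that the chosen vertices are distinct from $u,v$ and from each other: $S_u\cap S_v\neq\emptyset$ forces $S_u\neq S_v^c$ and $S_v\neq S_u^c$, while $S_u\cup S_v=[n]$ forces $S_u^c\neq S_v^c$, so these are genuine paths. Connectedness and $\operatorname{diam}(G^c(L^B)_{SR})\le 3$ then follow immediately.

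The only genuinely delicate point is the translation step: one must be certain that meets of arbitrary vertices of $G^c(L^B)$, lying on arbitrary chains of the blow-up, really are governed by subset intersection in $\mathbf{2}^n$, which is exactly the content of Remark~\ref{pcbool}, and that every nonempty proper subset is actually attained by a class, which is Corollary~\ref{pseudo}(2). Once these identifications are secured the path-finding is routine Boolean combinatorics. I would also note that the bound is sharp, since taking $S_u=\{1,2\}$ and $S_v=\{2,3\}$ (available because $n\ge 3$) forces the length-$3$ case; thus the diameter is in fact equal to $3$, although only the upper bound is required here.
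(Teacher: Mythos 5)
Your proof is correct, and its skeleton matches the paper's: both take the vertex-set equality and the adjacency rule supplied by Lemma~\ref{compblow} ($u$ adjacent to $v$ in $G^c(L^B)_{SR}$ iff $[u]=[v]$, or $[u]\neq[v]$ and $u\wedge v=0$) and then exhibit explicit paths of length at most $3$ through auxiliary vertices of small support. The genuine difference is the case split and the choice of intermediate vertices. For the remaining pairs ($[a]\neq[b]$, $a\wedge b\neq 0$) the paper splits according to whether $a$ and $b$ are \emph{comparable in $L^B$}: for comparable pairs it finds a coordinate where both vanish and gets a $2$-path; for incomparable pairs it asserts crossing coordinates $i\neq j$ with $a_i\neq 0=b_i$ and $b_j\neq 0=a_j$, and routes a $3$-path through two single-coordinate vertices. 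You split instead on whether $S_u\cup S_v=[n]$, using a single-coordinate vertex when the union is proper and the complement-support vertices $S_u^c$, $S_v^c$ when it is not. Your criterion is purely support-theoretic and is arguably safer: the paper's incomparable case tacitly needs incomparability of $a,b$ in $L^B$ to force their supports to be incomparable as subsets, which is a claim about how the blow-up order interacts with supports (pairs with nested but unequal supports, e.g.\ an element in the interior of the chain $C_{\{1\}}$ against the bottom of $C_{\{1,2\}}$, are the delicate ones); in your scheme all such pairs fall into the harmless case $S_u\cup S_v\neq[n]$ and receive a $2$-path, so no such claim is needed. One small caveat on your closing aside: the example $S_u=\{1,2\}$, $S_v=\{2,3\}$ certifies that the diameter equals $3$ only when $n=3$, since for $n\geq 4$ the union is proper; for general $n$ take instead $S_u=[n]\setminus\{n\}$ and $S_v=[n]\setminus\{1\}$. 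This does not affect the lemma itself, which only asserts the upper bound.
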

	
	\begin{proof}
		
		By Lemma \ref{compblow} (1), we have $V(G^c(L^B))=V(G^c(L^B)_{SR})$.

		
		Using the representation of elements of $L^B$, we assume that $a=(a_1,a_2,\dots,a_n)$,\\ $b=(b_1,b_2,\dots,b_n)\in V(G^c(L^B))$.  We have to find a path between $a$ and $b$ in $G^c(L^B)$.
		To see this, consider the following two cases:
		
		\vspace{3mm}
		
		\textbf{Case 1.}   $[a] =[b] $.
		\par By Lemma \ref{compblow} (2), we have $a$ and $b$ are adjacent in $G^c(L^B)_{SR}$.
		\vspace{3mm}

		\textbf{Case 2.}
		$[a] \neq [b] $.
		\par If $a$ is not adjacent to $b$ in $G^c(L^B)$, then by Lemma \ref{compblow} (3), $a$ is adjacent to $b$ in $G^c(L^B)_{SR}$.
		\vspace{3mm}

		\par Now, suppose that $a$ is  adjacent to $b$ in $G^c(L^B)$, i.e., $a\wedge b \neq 0$. \\
		We consider two subcases:
		
		\textbf{Subcase 2(1).} $a$ and $b$ are comparable.\\
		Let $a\leq b$ or $b\leq a$. Then for some $i$, $1\leq i\leq n$ such that $a_i=b_i=0$, as $a,b\in V(G^c(L^B)_{SR})$. Let $c=(0,\dots,0, c_i,0,\dots, 0)$ with $c_i\neq 0$ such that $a\wedge c=0$ and $b\wedge c=0$. This implies that $d_{G^c(L^B)}(a,c)=d_{G^c(L^B)}(b,c)=2$. Hence, $a$ and $c$ are mutually maximally distant, and $b$ and $c$ are also mutually maximally distant. This means that $a$ and  $c$ are adjacent  and $c$ and $b$ are adjacent in $G^c(L^B)_{SR}$.  Therefore, we have a path $ a - c - b$.
		\vspace{3mm}

		\textbf{Subcase 2(2).} $a$ and $b$ are incomparable.\\
		Now, assume that $a\nleq b$ and $b\nleq a$. Then there exists $i$ and $j$ such that  $1\leq i\neq j\leq n$ such that $a_i\neq 0 $, $b_j\neq 0$ and $a_j=0$, $b_i=0$. Let $c=(0,\dots,0, c_j,0,\dots, 0)$ and $d=(0,\dots,0, d_i,0,\dots, 0)$ with $c_j\neq 0$ and $d_i\neq 0$. In this case, $a\wedge c=0$, $c\wedge d=0$ and $d\wedge b=0$. Hence by Theorem \ref{star} $d_{G^c(L^B)}(a,c)=d_{G^c(L^B)}(c,d)=d_{G^c(L^B)}(d,b)=diam(G^c(L^B))=2$. This means that $a$ and $c$, $c$ and $d$, $d$ and $b$ are mutually maximally distant. This means we have a path  $a-c-d-b$. 
		This shows that $G^c(L^B)_{SR}$ is a connected graph and  $diam(G^c(L^B)_{SR})\leq 3$.	\end{proof}

	Consider the lattice $L^B$ given in Figure \ref{figure3}. The graphs $G(L^B)$, $G^c(L^B)$ and $G^c(L^B)_{SR}$ of $L^B$ are shown in Figure \ref{figure6} and Figure \ref{figure4}, respectively. This illustrates  Lemma \ref{comconnect}.
	\begin{figure}[h]
		\begin{center}
			\begin{tikzpicture}	[scale=1]

				\begin{scope}[shift={(-4,0)}]
					\draw [fill=black] (-2,1) circle (.05);
					\draw [fill=black] (-1,2) circle (.05);
					
					\draw [fill=black] (1,2) circle (.05);

					\draw [fill=black] (-0.7,1.5) circle (.05);
					
					\draw [fill=black] (-0.3,1.1) circle (.05);
					
					\draw [fill=black] (1.9,0.9) circle (.05);
					\draw [fill=black] (2.4,1.6) circle (.05);
					\draw [fill=black] (-1,4) circle (.05);
					\draw [fill=black] (1,4) circle (.05);
					\draw [fill=black] (0,4) circle (.05);
					\draw [fill=black] (0,3) circle (.05);
					\draw [fill=black] (1,1.5) circle (.05);
					
					\node at (0.4,3.051) {$ x_2 ^{1}$};
					\node at (1.2,2.2) {\bf$ x_3 ^{2}$};
					\node at (-0.15,0.85) {$ x_1 ^{1}$};
					\node at (-0.75,1.25) {\tiny$ x_1 ^{2}$};
					\node at (-1.2,2.3) {$ x_1 ^{3}$};
					\node at (2.0,0.61) {$ x_{12} ^{1}$};
					\node at (2.55,1.32) {\bf$ x_{12} ^{2}$};
					\node at (-1,4.3) {\bf$ x_{13} ^{1}$};
					\node at (1,4.3) {\bf$ x_{13} ^{3}$};
					\node at (-2,0.65) {\bf$ x_{23} ^{1}$};
					\node at (0,4.3) {\bf$ x_{13} ^{2}$};
					\node at (1,1.1) {\bf$ x_3 ^{1}$};
					
					\draw (-2,1)--(-0.3,1.1) -- (1,2);
					\draw (-0.3,1.1) -- (1,1.5)--(2.4,1.6);
					\draw (1,1.5)--(1.9,0.9);
					\draw (1,1.5)--(0,3);
					\draw (-0.7,1.5)--(-2,1);
					\draw (-0.7,1.5) -- (1,1.5);
					\draw (1,1.5) -- (-1,2);
					\draw (-0.7,1.5) -- (1,2);
					\draw (-0.7,1.5) -- (0,3)--(-0.3,1.1);
					\draw (0,3)--(1,4);
					\draw (0,3)--(0,4);
					\draw (0,3)--(-1,4);
					\draw (0,3)-- (-1,2)-- (1,2)--(0,3);
					\draw (-2,1)--(-1,2);
					\draw (1,2)--(1.9,0.9);
					\draw (1,2)--(2.4,1.6);
					
				\end{scope}
			\end{tikzpicture}
			\caption{$G(L^B)$}\label{figure6}
		\end{center}
	\end{figure} 
	
	\begin{figure}[h]
		
		\begin{center}
			\begin{tikzpicture}	[scale=0.6]

				\begin{scope}[shift={(-10,0)}]
					
					\draw [fill=black] (-5,0) circle (.05); 
					\draw [fill=black] (-4.45,2.3) circle (.05);
					\draw [fill=black] (0,5) circle (.05);
					\draw [fill=black] (-2.75,4.2) circle (.05);
					\draw [fill=black] (4.45,2.3) circle (.05);
					\draw [fill=black] (2.75,4.2) circle (.05);
					\draw [fill=black] (4.45,-2.3) circle (.05);
					\draw [fill=black] (0,-5) circle (.05);
					\draw [fill=black] (-4.45,-2.3) circle (.05);
					\draw [fill=black] (-2.75,-4.2) circle (.05); 
					\draw [fill=black] (2.75,-4.2) circle (.05);		
					\draw [fill=black] (5,0) circle (.05);
					
					\draw (-2.75,4.2)--(2.75,4.2)--(0,5)--(2.75,-4.2)--(-5,0)--(2.75,4.2)--(2.75,-4.2)--(-2.75,4.2)--(0,5)--(4.45,2.3)--(-4.45,2.3)--(2.75,-4.2)--(4.45,2.3)--(-2.75,4.2)--(-4.45,2.3)--(-2.75,-4.2)--(4.45,2.3)--(0,-5)--(-4.45,2.3)--(0,5)--(-2.75,-4.2)--(0,-5)--(-2.75,4.2)--(-2.75,-4.2)--(2.75,4.2)--(0,-5)--(2.75,-4.2)--(-2.75,-4.2);
					
					\draw (0,-5)--(0,5);
					\draw(-5,0)--(4.45,-2.3)--(-2.75,4.2);
					\draw(4.45,-2.3)--(2.75,4.2)--(-4.45,2.3)--(5,0);
					\draw(4.45,-2.3)--(-4.45,2.3);
					\draw(4.45,-2.3)--(4.45,2.3);
					\draw(4.45,-2.3)--(2.75,-4.2)--(-4.45,-2.3)--(-5,0)--(-2.75,4.2)--(-4.45,-2.3)--(4.45,-2.3)--(5,0)--(4.45,2.3)--(2.75,4.2)--(-4.45,-2.3);
					
					\node at (5.35,0) {$x_2^1$};
					\node at (0,5.35) {$x_{1}^3$};
					\node at (0,-5.35) {$x_1^2$};
					\node at (-5.35,0) {$x_3^1$};
					\node at (-4.95,2.35) {$x_{12}^2$};
					\node at (4.95,2.35) {$x_{12}^1$};
					\node at (-4.85,-2.35) {$x_{3}^2$};
					\node at (4.95,-2.35) {$x_{23}^1$};
					\node at (-3.05,4.8) {$x_{13}^1$};
					\node at (3.15,4.50) {$x_{13}^3$};
					\node at (-3.10,-4.45) {$x_1^1$};
					\node at (3.05,-4.55) {$x_{13}^2$};
					
					\node at (0,-7.45) {$G^c(L^B)$};
					
				\end{scope}
				
				\begin{scope}[shift={(2,0)}]
					
					\draw [fill=black] (-5,0) circle (.05); 
					\draw [fill=black] (-4.45,2.3) circle (.05);
					\draw [fill=black] (0,5) circle (.05);
					\draw [fill=black] (-2.75,4.2) circle (.05);
					\draw [fill=black] (4.45,2.3) circle (.05);
					\draw [fill=black] (2.75,4.2) circle (.05);
					\draw [fill=black] (4.45,-2.3) circle (.05);
					\draw [fill=black] (0,-5) circle (.05);
					\draw [fill=black] (-4.45,-2.3) circle (.05);
					\draw [fill=black] (-2.75,-4.2) circle (.05); 
					\draw [fill=black] (2.75,-4.2) circle (.05);		
					\draw [fill=black] (5,0) circle (.05);
					
					\draw(-2.75,-4.2)--(0,-5)--(0,5)--(-2.75,-4.2)--(4.45,-2.3)--(0,5)--(-5,0)--(0,-5)--(-4.45,-2.3)--(0,5)--(5,0)--(0,-5)--(4.45,-2.3);
					
					\draw(-2.75,-4.2)--(-4.45,-2.3);
					\draw(-2.75,-4.2)--(-5,0);
					\draw(-2.75,-4.2)--(5,0);
					\draw(-2.75,4.2)--(5,0)--(2.75,4.2)--(-2.75,4.2)--(2.75,-4.2)--(5,0);
					\draw(2.75,-4.2)--(2.75,4.2);
					\draw(-4.45,2.3)--(4.45,2.3)--(-5,0)--(-4.45,2.3)--(-4.45,-2.3)--(4.45,2.3);
					\draw(-5,0)--(-4.45,-2.3);
					\draw(-5,0)--(5,0)--(-4.45,-2.3);

					\node at (5.35,0) {$x_2^1$};
					\node at (0,5.35) {$x_{1}^3$};
					\node at (0,-5.35) {$x_1^2$};
					\node at (-5.35,0) {$x_3^1$};
					\node at (-4.85,2.35) {$x_{12}^2$};
					\node at (5,2.35) {$x_{12}^1$};
					\node at (-4.85,-2.35) {$x_{3}^2$};
					\node at (4.85,-2.35) {$x_{23}^1$};
					\node at (-3.00,4.65) {$x_{13}^1$};
					\node at (3.05,4.45) {$x_{13}^3$};
					\node at (-3.05,-4.45) {$x_1^1$};
					\node at (3.05,-4.45) {$x_{13}^2$};
					
					\node at (0,-7.45) {$G^c(L^B)_{SR}$};
					
				\end{scope}         
				
			\end{tikzpicture}
			
			\caption{$G^c(L^B)$ and $G^c(L^B)_{SR}$  }\label{figure4}
		\end{center}
	\end{figure}
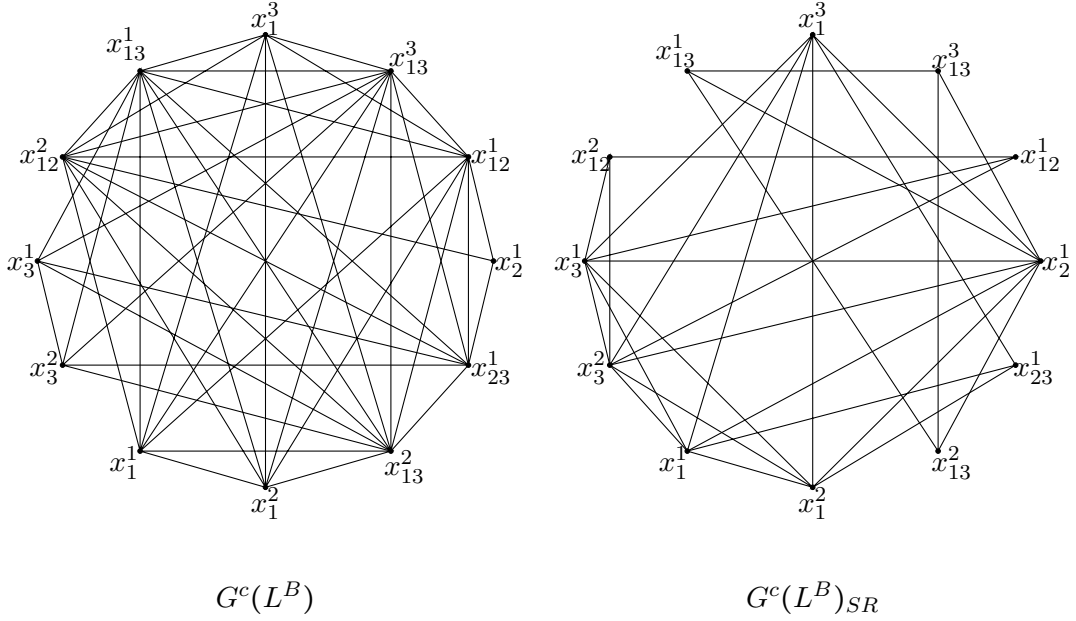
	\newpage
	\begin{lemma}\label{compinde}
		Let $L^B$ be a blow-up of a Boolean lattice $L\cong \mathbf{2}^n$. Then $\beta(G^c(L^B)_{SR})=2^{n-1}-1$.
	\end{lemma}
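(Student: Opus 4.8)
The plan is to read off the structure of $G^c(L^B)_{SR}$ from Lemma \ref{compblow} and then translate the computation of $\beta$ into an extremal problem about intersecting families of subsets. Using the tuple representation of $L^B$, I would associate to each vertex $x=(x_1,\dots,x_n)$ its support $S_x=\{i : x_i\neq 0\}\subseteq\{1,\dots,n\}$. Since the vertices of $G^c(L^B)$ are exactly the nonzero zero-divisors, each $S_x$ is a \emph{proper nonempty} subset of $\{1,\dots,n\}$ (if $S_x=\emptyset$ then $x=0$, and if $S_x=\{1,\dots,n\}$ then $x=\mathbf 1$ is not a zero-divisor). By Remark \ref{pcbool} the meet in $L^B$ agrees with the Boolean meet on supports, so $x\wedge y=0$ precisely when $S_x\cap S_y=\emptyset$; and $[x]=[y]$ if and only if $S_x=S_y$. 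Thus the equivalence classes are indexed by the proper nonempty subsets of $\{1,\dots,n\}$.

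Combining parts (2) and (3) of Lemma \ref{compblow} then gives a clean description of adjacency in $G^c(L^B)_{SR}$: two distinct vertices $x,y$ are adjacent if and only if either $S_x=S_y$, or $S_x\neq S_y$ and $x\wedge y=0$, i.e. $S_x\cap S_y=\emptyset$. In particular each class is a clique, and two vertices from different classes are adjacent exactly when their supports are disjoint; note that the clique structure of a class is independent of the chain lengths, so $\beta$ will not depend on the $m_i,n_j$. Now let $I$ be an independent set. Because each class is a clique, $I$ meets every class in at most one vertex, and for two chosen vertices lying in distinct classes $S,S'$ their non-adjacency forces $S\cap S'\neq\emptyset$. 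Hence $\mathcal{F}=\{S_x : x\in I\}$ is a pairwise intersecting family of proper nonempty subsets with $|I|=|\mathcal{F}|$; conversely any such family produces an independent set of the same size by selecting one vertex from each corresponding class. Therefore $\beta(G^c(L^B)_{SR})$ equals the maximum size of a pairwise intersecting family of proper nonempty subsets of $\{1,\dots,n\}$.

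It remains to evaluate this maximum. For the lower bound I would take all subsets containing a fixed element, say $1$, and discard the top set $\{1,\dots,n\}$; this yields $2^{n-1}-1$ proper nonempty subsets, any two of which share the element $1$. For the upper bound, partition the $2^n$ subsets of $\{1,\dots,n\}$ into the $2^{n-1}$ complementary pairs $\{A,A^{c}\}$; since $A\cap A^{c}=\emptyset$, an intersecting family contains at most one member of each pair. The single pair $\{\emptyset,\{1,\dots,n\}\}$ is precisely the pair whose two members are both excluded from consideration, so a family of proper nonempty subsets draws only from the remaining $2^{n-1}-1$ pairs, giving $|\mathcal{F}|\leq 2^{n-1}-1$. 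Matching the bounds gives $\beta(G^c(L^B)_{SR})=2^{n-1}-1$. The conceptual work is entirely in the second paragraph, where Lemma \ref{compblow} is used to reduce the graph parameter to the intersecting-family question; the extremal estimate itself is the classical maximal-intersecting-family bound, and the only point requiring care is correctly excluding the classes $\emptyset$ and $\{1,\dots,n\}$, which is exactly what lowers the count from $2^{n-1}$ to $2^{n-1}-1$.
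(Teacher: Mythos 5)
Your proposal is correct, and its skeleton matches the paper's: both arguments use Lemma \ref{compblow} to observe that each equivalence class is a clique (so an independent set picks at most one vertex per class), both reduce the problem to the Boolean lattice $\mathbf{2}^n$ (you via supports, the paper via $[L^B]\cong\mathbf{2}^n$ from Corollary \ref{pseudo}), and both use the same lower-bound construction, namely the ``star'' family of all proper subsets containing a fixed atom, which is exactly the paper's set $V_1$. Where you genuinely diverge is the upper bound, and your version is the stronger one. The paper argues that $V_1$ is independent, that every vertex outside $V_1$ is adjacent to some vertex of $V_1$, and that $|V_1|$ is half the vertex count, and from this asserts maximality; as written this only shows $V_1$ is a \emph{maximal} (indeed dominating) independent set, which does not by itself imply it is \emph{maximum}. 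Your complementary-pairs argument closes precisely this gap: the involution $A\mapsto A^{c}$ pairs the proper nonempty subsets into $2^{n-1}-1$ pairs of disjoint supports, i.e.\ a perfect matching in $G^c(\mathbf{2}^n)_{SR}$, and any independent set meets each matching edge in at most one vertex, giving the rigorous bound $\beta\leq 2^{n-1}-1$. So your write-up is not just a restyling of the paper's proof into extremal set-theoretic language; at the one step where the paper's reasoning is loose, you supply a complete argument.
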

	
	\begin{proof}
		By Lemma \ref{comconnect},  $G^{c}(L^B)_{SR}$ is  connected. We prove that $\beta(G^{c}(L^B)_{SR})=2^{n-1}-1$.

		By Lemma~\ref{compblow}, if $[x] = [y]$, then $x$ and $y$ are adjacent in $G^{c}(L^B)_{SR}$, for  $x, y \in V(G^{c}(L^B)_{SR})$. This implies that at most one element from each equivalence class $[x]$ can be included in an independent set of $G^{c}(L^B)_{SR}$ for every $x \in V(G^{c}(L^B)_{SR})$. 
		
		Therefore, computing the largest independent set of $G^{c}(L^B)_{SR}$ is equivalent to computing the largest independent set of $G^{c}([L^B])_{SR}$. By Corollary~\ref{pseudo}~(2), this is the same as computing the largest independent set of $G^{c}(\mathbf{2}^n)_{SR}$.

		Now, we claim that $\beta(G^{c}(L)_{SR})=2^{n-1}-1$, where $L\cong \mathbf{2}^n$.

		For this, we define the following subsets of vertices:		  
		\begin{align*}
			V_1 &= \{(1, x_2, x_3, \dots, x_n) \mid x_i \in \{0,1\},\ \text{for } 2 \leq i \leq n\}, \\
			V_2 &= \{(0,1, x_3, \dots, x_n) \mid x_i \in \{0,1\},\ \text{for } 3 \leq i \leq n\}, \\
			&\ \ \vdots \\
			V_{n-1} &= \{(0, 0, \dots, 1, x_n) \mid x_n \in \{0,1\} \}, \\
			V_n &= \{(0, 0, \dots, 1)\}.
		\end{align*}
		
		We can easily observe that:
		$$
		1 = |V_n| < |V_{n-1}| < \dots < |V_1| = 2^{n-1} - 1 = \frac{2^n - 2}{2} = \frac{1}{2} \left| V(G^c(L)_{SR}) \right|.
		$$
		
		Since every vertex in $V(G^c(L)_{SR}) \setminus V_1$ is adjacent to at least one vertex in $V_1$, and for any $x, y \in V_1$, we have $x \wedge y \neq 0$, it follows from Lemma~\ref{compblow}~(3) that no two vertices in $V_1$ are adjacent.
		
		Therefore, $|V_1|$ is the maximum size of a set of vertices in $G^c(L)_{SR}$ such that no two are adjacent, and every vertex outside $V_1$ is adjacent to at least one vertex in $V_1$.
		
		Thus, we conclude	$
		\beta\left(G^{c}(L \cong \mathbf{2}^n)_{SR}\right) = 2^{n-1} - 1 \quad \text{and hence} \quad \beta\left(G^{c}(L^B)_{SR}\right) = 2^{n-1} - 1.
		$
	\end{proof}
	
	To illustrate  Lemma \ref{compinde}, consider the lattice $L^B$ depicted in Figure \ref{figure3} and the corresponding graphs $G^c(L^B)$ and $G^c(L^B)_{SR}$ of $L^B$ shown in  Figure \ref{figure4}. The largest independent set is $V_1=\{(1,0,0),(1,0,1),(1,1,0)\}=\{x_{1}^1,x_{12}^{1},x_{13}^1\}$ and $|V_1|=3$. Thus $\beta(G^{c}(L^B)_{SR})=3$.

	Now, we state the main result of this section.
	
	\begin{theorem}\label{compthm}
		Let $L^B$ be a blow-up of a Boolean lattice $L\cong \mathbf{2}^n$. Then $sdim_{M}(G^c(L^B))=|Z^*(L^B)|-2^{n-1}+1$.
	\end{theorem}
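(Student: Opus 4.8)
The plan is to assemble the theorem directly from the structural results already established for the strong resolving graph $G^c(L^B)_{SR}$, since all the substantive work has been carried out in the preceding lemmas. The starting point is Theorem \ref{gala}, which reduces the strong metric dimension to a vertex cover number: because $G^c(L^B)$ is connected with $diam(G^c(L^B))=2$ (Remark \ref{rem4.6} together with Theorem \ref{star}), Theorem \ref{gala} applies and gives $sdim_M(G^c(L^B)) = \alpha(G^c(L^B)_{SR})$. Thus it suffices to determine the vertex cover number of $G^c(L^B)_{SR}$.

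First I would pin down the order of $G^c(L^B)_{SR}$. By Lemma \ref{compblow}(1), its vertex set coincides with $V(G^c(L^B))$, which by the definition of the zero-divisor graph is exactly $Z^*(L^B)$; hence $G^c(L^B)_{SR}$ has order $|Z^*(L^B)|$.

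Next I would invoke Gallai's Theorem (Theorem \ref{galai}), applied to the graph $G^c(L^B)_{SR}$, to obtain $\alpha(G^c(L^B)_{SR}) + \beta(G^c(L^B)_{SR}) = |Z^*(L^B)|$. Lemma \ref{compinde} supplies the independence number $\beta(G^c(L^B)_{SR}) = 2^{n-1} - 1$, so rearranging yields $\alpha(G^c(L^B)_{SR}) = |Z^*(L^B)| - (2^{n-1} - 1) = |Z^*(L^B)| - 2^{n-1} + 1$. Combining this with the reduction from Theorem \ref{gala} gives $sdim_M(G^c(L^B)) = |Z^*(L^B)| - 2^{n-1} + 1$, as claimed.

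There is essentially no obstacle left at this final stage; the genuine difficulty lies entirely in the earlier steps, namely establishing connectivity of $G^c(L^B)_{SR}$ (Lemma \ref{comconnect}), characterizing adjacency of boundary vertices in the strong resolving graph via the equivalence relation $\sim$ and pseudocomplements (Lemma \ref{compblow}), and the careful counting of a maximum independent set through the partition $V_1, \dots, V_n$ (Lemma \ref{compinde}). The one point requiring attention in the assembly is to apply Gallai's identity to $G^c(L^B)_{SR}$ rather than to $G^c(L^B)$, using the order $|Z^*(L^B)|$ furnished by Lemma \ref{compblow}(1); once this is observed, the three cited results combine to give the stated formula immediately.
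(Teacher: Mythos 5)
Your proposal is correct and follows essentially the same route as the paper's proof: both reduce $sdim_M(G^c(L^B))$ to $\alpha(G^c(L^B)_{SR})$ via Theorem \ref{gala}, apply Gallai's identity (Theorem \ref{galai}) to $G^c(L^B)_{SR}$, and conclude using $\beta(G^c(L^B)_{SR})=2^{n-1}-1$ from Lemma \ref{compinde} together with $V(G^c(L^B)_{SR})=V(G^c(L^B))=Z^*(L^B)$ from Lemma \ref{compblow}(1). Your explicit remark that connectivity (needed for Theorem \ref{gala}) follows from Remark \ref{rem4.6} and Theorem \ref{star} is a small clarification the paper leaves implicit, but the argument is otherwise identical.
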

	\begin{proof}
		By Lemma \ref{compinde}, we have  $\beta(G^{c}(L^B)_{SR})=2^{n-1}-1$. Then by Theorem \ref{galai}, we have $\alpha(G^{c}(L^B)_{SR})+\beta (G^{c}(L^B)_{SR})=|V(G^{c}(L^B))_{SR}|$. Then by Theorem \ref{gala}, we have $sdim_{M}(G^{c}(L^B))=\alpha(G^{c}(L^B)_{SR})=|V(G^{c}(L^B))_{SR}|-\beta (G^{c}(L^B)_{SR})$. Thus, by Lemma \ref{compblow}, $sdim_{M}(G^{c}(L^B))=|V(G^{c}(L^B)_{SR})|-\beta(G^{c}(L^B)_{SR})=|V(G^{c}(L^B))|-2^{n-1}+1=|Z^*(L^B)|-2^{n-1}+1$. 
	\end{proof}
	Now, we close this section with the following corollary, which follows from  Theorem \ref{compthm}.
	\begin{corollary}
		Let $L\cong \mathbf{2}^n$ be a  Boolean lattice. Then  $sdim_{M}(G^c(L))=2^{n} -2^{n-1}-1$.
	\end{corollary}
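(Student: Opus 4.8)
The plan is to recognize this corollary as the special case of Theorem~\ref{compthm} in which the blow-up is trivial. Concretely, a Boolean lattice $L\cong\mathbf{2}^n$ is itself a blow-up of $\mathbf{2}^n$: in Definition~\ref{blowup} one may take every chain to consist of a single element, so that $L^B\cong L$, which is exactly the situation already flagged in Corollary~\ref{corol1}. Hence Theorem~\ref{compthm} applies directly and yields $sdim_M(G^c(L))=|Z^*(L)|-2^{n-1}+1$, so it only remains to evaluate $|Z^*(L)|$.

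The second step is to compute $|Z^*(L)|$ for $L\cong\mathbf{2}^n$. Identifying the elements of $\mathbf{2}^n$ with subsets of $\{1,2,\dots,n\}$, I would determine which elements are zero-divisors. The greatest element $1$ satisfies $1\wedge b=b$, so $1\wedge b=0$ forces $b=0$; thus $1\notin Z(L)$. The least element $0$ lies in $Z(L)$ but is excluded from $Z^*(L)$ by definition. Every remaining element $a$ corresponds to a proper nonempty subset, and its complement $a^{\ast}$ (which exists since $\mathbf{2}^n$ is complemented) is a nonzero element with $a\wedge a^{\ast}=0$, so $a\in Z^*(L)$. Therefore $Z^*(L)=L\setminus\{0,1\}$ and $|Z^*(L)|=2^n-2$.

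Finally, substituting into the formula from Theorem~\ref{compthm} gives
\[
sdim_M(G^c(L))=(2^n-2)-2^{n-1}+1=2^n-2^{n-1}-1,
\]
as claimed. There is essentially no obstacle here: the only points requiring care are confirming that this trivial blow-up genuinely satisfies the standing hypotheses behind Theorem~\ref{compthm}, namely that $L$ is atomic $0$-distributive with at least three atoms (which holds since $\mathbf{2}^n$ has exactly $n\geq 3$ atoms, the convention assumed throughout), so that $G^c(L)$ is connected of diameter $2$ by Theorem~\ref{star}, and correctly counting the zero-divisors of $\mathbf{2}^n$ as carried out above.
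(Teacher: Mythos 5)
Your proposal is correct and follows essentially the same route as the paper: the paper's proof simply cites Theorem~\ref{compthm} together with Corollary~\ref{corol1} (the trivial-blow-up case $L^B\cong L$), which is exactly your specialization. Your explicit count $|Z^*(\mathbf{2}^n)|=2^n-2$ and the verification of the standing hypothesis $n\geq 3$ just make explicit what the paper leaves implicit.
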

	\begin{proof}
		The proof follows from Theorem \ref{compthm} and  Corollary \ref{corol1}. 
	\end{proof}
	
	\section{Applications to graphs from Algebraic Structures}\label{applications}
	In this section, we provide some applications of our results to the total graph, the maximal graph of a commutative ring, complement of zero-divisor graph of a reduced ring the intersection graph of ideals,  and the nonzero component graph of vector spaces.
	
	\subsection{Total graph of nonzero annihilating ideals of a ring} 
	Let $R$ be a commutative ring with identity, and let $Id(R)$ denote the set of all ideals of $R$. It is well known that 
	$Id(R)$ forms a complete modular, $1$-distributive lattice under inclusion as a partial order, with $(0)$ as the least element and $R$ as the greatest element. Also, sup$\{I,J\}=I+J$ and  inf$\{ I, J\} = I\cap J$.

	We denote the lattice $\text{Id}(R)$ by $L$ and let $L^\partial$ represent its dual. In $L^\partial$, the supremum and infimum of $\{I, J\}$ is $I \cap J$, and  $I + J$ respectively. The ideal $R$ serves as the least element in $L^\partial$, while the ideal $(0)$ is the greatest element. By duality, $L^\partial$ is a 0-distributive lattice. Additionally, the maximal ideals of $R$ are precisely the atoms of $L^\partial$, which makes $L^\partial$ an atomic lattice. Thus $L^\partial$ is an atomic $0-$distributive lattice.
	
	In \cite{svd}, Visweswaran and Patel introduced the concept of the annihilating ideal graph (also referred as the \textit{total graph} of nonzero annihilating ideals or \textit{sum annihilating ideal graph}) for a commutative ring $R$ with identity, denoted by $\Omega(R)$. The vertex set of $\Omega(R)$ consists of all nonzero annihilating ideals of $R$, and two distinct vertices 
	$I$ and $J$ are adjacent if and only if the sum $I+J$ is an annihilating ideal of
	$R$.
	
	\begin{definition}[{Ye and Wu \cite{mytw}}] Let $R$ be a commutative ring with identity. The {\it comaximal ideal graph}, $\mathbb{CG}(R)$ is a simple graph with its vertices are the nonzero proper ideals of $R$ not contained in Jacobson radical $J(R)$ of $R$ and two distinct vertices $I$ and $J$ are adjacent if and only if $I+J=R$.
	\end{definition}
	
	\begin{definition}[{Akbari et al. \cite {saaa}}] Let $R$ be a commutative ring with identity. The {\it co-annihilating ideal graph} of $R$, denoted by $\mathbb{CAG}(R)$ is a graph whose vertex set is the set of all non-zero proper ideals of $R$ and two distinct vertices $I$ and $J$ are adjacent whenever $ Ann(I) \cap  Ann (J)=\{0\}$, where $Ann(I)=\{x\in R ~|~ xi=0~ \text{for all } i \in I\}$.
	\end{definition}
	
	The following observation is due to Gadge and Joshi \cite{pjtotal}.
	\begin{observation}[{Gadge and Joshi \cite[Observation 4.6 (2)]{nkvj23}}]\label{observ}
		If $R$ is an Artinian ring, then $\Omega(R)=\mathbb{CAG}(R)^{c}=\mathbb{CG}(R)^{c} $.
	\end{observation}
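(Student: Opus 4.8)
The plan is to reduce the adjacency relation of all three graphs to the single ideal-theoretic condition ``$I+J\neq R$'', and then to compare vertex sets. The unifying device is the elementary identity $Ann(I)\cap Ann(J)=Ann(I+J)$, valid in any commutative ring, since an element kills both $I$ and $J$ precisely when it kills $I+J$. Combined with a characterization of annihilating ideals in the Artinian case, this collapses the three definitions into one.

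First I would establish the key lemma for Artinian $R$: a nonzero ideal $K$ satisfies $Ann(K)\neq \{0\}$ if and only if $K\neq R$; equivalently, the nonzero annihilating ideals of $R$ are exactly the nonzero proper ideals, so that $V(\Omega(R))$ is the set of nonzero proper ideals. For the nontrivial direction I would invoke the structure theorem and write $R\cong\prod_{i=1}^{k}R_i$ as a finite product of local Artinian rings $(R_i,\mathfrak m_i)$. An ideal decomposes as $K=\prod_{i=1}^{k}K_i$; if $K\neq R$ then some $K_j\subseteq\mathfrak m_j$, and as $\mathfrak m_j$ is nilpotent the socle $Ann_{R_j}(\mathfrak m_j)$ is nonzero and annihilates $K_j$. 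Placing a nonzero socle element in the $j$-th coordinate and $0$ elsewhere produces a nonzero element of $Ann(K)$; the converse is immediate.

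Next I would read off the three adjacencies on this common ground set. By definition $I$ is adjacent to $J$ in $\Omega(R)$ iff $I+J$ is annihilating, which by the lemma means $I+J\neq R$. In $\mathbb{CAG}(R)$ we have $I$ adjacent to $J$ iff $Ann(I)\cap Ann(J)=\{0\}$, hence by the identity above iff $Ann(I+J)=\{0\}$ iff $I+J=R$; passing to the complement, $I$ is adjacent to $J$ in $\mathbb{CAG}(R)^{c}$ iff $I+J\neq R$. Since $V(\mathbb{CAG}(R))$ is again the set of nonzero proper ideals, this already yields $\Omega(R)=\mathbb{CAG}(R)^{c}$. For the comaximal graph, $I$ is adjacent to $J$ in $\mathbb{CG}(R)$ iff $I+J=R$, so $\mathbb{CG}(R)^{c}$ carries exactly the relation $I+J\neq R$ as well.

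The step demanding the most care is the comparison of vertex sets for $\mathbb{CG}(R)$: its vertices are the nonzero proper ideals not contained in $J(R)$, whereas $\Omega(R)$ retains every nonzero proper ideal. I would note that an ideal $I\subseteq J(R)$ can never be comaximal with a proper ideal (if $I+J=R$ with $I\subseteq J(R)$, then $J=R$), so such an $I$ is isolated in $\mathbb{CG}(R)$ and dominating in both $\Omega(R)$ and $\mathbb{CG}(R)^{c}$; the adjacency pattern is thereby forced on the common vertices, and in the reduced Artinian case $J(R)=\{0\}$ the vertex sets coincide outright, giving $\Omega(R)=\mathbb{CG}(R)^{c}$. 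Reconciling the two conventions on vertices contained in $J(R)$ is the only genuine obstacle; everything else is bookkeeping around the identity $Ann(I)\cap Ann(J)=Ann(I+J)$ together with the Artinian socle argument.
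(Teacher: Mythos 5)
The paper itself contains no proof of this observation---it is quoted verbatim from \cite{nkvj23}---so your argument has to stand on its own merits. Its core is sound: the identity $\mathrm{Ann}(I)\cap\mathrm{Ann}(J)=\mathrm{Ann}(I+J)$, combined with your socle lemma that in an Artinian ring the nonzero annihilating ideals are exactly the nonzero proper ideals, correctly collapses the adjacency rules of $\Omega(R)$ and of $\mathbb{CAG}(R)^{c}$ to the single condition $I+J\neq R$ on one and the same vertex set. That part is complete and gives $\Omega(R)=\mathbb{CAG}(R)^{c}$ for every Artinian $R$.

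The gap is in the third equality, which you flagged but did not close---and cannot close as the statement is transcribed here. With the paper's definition of $\mathbb{CG}(R)$ (vertices are the nonzero proper ideals \emph{not contained in} $J(R)$, and complementation does not change the vertex set), the equality $\Omega(R)=\mathbb{CG}(R)^{c}$ fails whenever $J(R)\neq\{0\}$: any nonzero ideal inside $J(R)$ is proper, hence annihilating by your own lemma, hence a vertex of $\Omega(R)$, but it is excluded from $\mathbb{CG}(R)^{c}$ by definition. Concretely, for $R=\mathbb{Z}/2\mathbb{Z}\times\mathbb{Z}/4\mathbb{Z}$ the graph $\Omega(R)$ has four vertices while $\mathbb{CG}(R)^{c}$ has three. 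Your remark that the adjacency pattern is ``forced on the common vertices'' only shows that $\mathbb{CG}(R)^{c}$ is the induced subgraph of $\Omega(R)$ on the vertices outside $J(R)$; it does not yield graph equality, and your retreat to the reduced case ($J(R)=\{0\}$) proves a strictly weaker statement than the one claimed. This matters for the paper, which goes on to apply the observation to products of Artinian local non-fields, where $J(R)\neq\{0\}$. The honest conclusion you should draw is: your argument establishes $\Omega(R)=\mathbb{CAG}(R)^{c}$ for all Artinian $R$ and $\Omega(R)=\mathbb{CG}(R)^{c}$ only for reduced Artinian $R$, and the remaining equality can only be rescued by adopting a different vertex convention for $\mathbb{CG}(R)$ (or by reading ``$=$'' as agreement away from the vertices inside $J(R)$), not by any further argument of the type you sketch.
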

	
	In \cite{mytwl}, M. Ye et al. proved that the comaximal ideal graph $\mathbb{CG}(R)$ is the blow-up of the zero-divisor graph of a Boolean lattice $\mathbf{2}^n$. In fact, they proved,
	
	\begin{theorem}[{M. Ye et al. \cite[Theorem 3.1]{mytwl}}]\label{comaxiid}
		Let $R$ be a ring with $|\text{Max}(R)|=n$, where $2 \leq n < \infty$. Then $\mathbb{CG}(R)$ is a blow-up of the zero-divisor graph of a Boolean lattice $\mathbf{2}^n$.
	\end{theorem}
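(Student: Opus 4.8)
The plan is to exhibit the blow-up concretely by sorting each ideal according to which maximal ideals contain it. Write $\mathrm{Max}(R)=\{\mathfrak m_1,\dots,\mathfrak m_n\}$ and identify $\mathbf 2^n$ with the Boolean lattice of all subsets of $[n]=\{1,2,\dots,n\}$ ordered by inclusion, so that $\wedge=\cap$, $\vee=\cup$, the bottom and top are $\emptyset$ and $[n]$, and $Z^*(\mathbf 2^n)=\{S\mid \emptyset\neq S\subsetneq[n]\}$ with $S$ adjacent to $T$ in $G(\mathbf 2^n)$ exactly when $S\cap T=\emptyset$. For a proper ideal $I$ define its \emph{support} $\sigma(I)=\{i\in[n]\mid I\subseteq\mathfrak m_i\}$. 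First I would record two elementary translations: a proper ideal $I$ is a vertex of $\mathbb{CG}(R)$ (nonzero and not contained in $J(R)=\bigcap_i\mathfrak m_i$) if and only if $I\neq 0$ and $\emptyset\neq\sigma(I)\subsetneq[n]$; and, since $I+J$ is proper iff some maximal ideal contains both $I$ and $J$, we have $I+J=R$ iff $\sigma(I)\cap\sigma(J)=\emptyset$. Thus $\sigma$ maps $V(\mathbb{CG}(R))$ into $Z^*(\mathbf 2^n)=V(G(\mathbf 2^n))$ and intertwines the comaximal relation with disjointness of supports.

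Next I would read off the blow-up pattern from $\sigma$. Partition $V(\mathbb{CG}(R))$ into the fibres $V_S=\sigma^{-1}(S)$ for $\emptyset\neq S\subsetneq[n]$. If $I,J$ lie in the same fibre $V_S$, then $\sigma(I)\cap\sigma(J)=S\neq\emptyset$, so $I+J\neq R$ and $I,J$ are non-adjacent; hence each $V_S$ induces an independent set (matching the fact that the elements of a chain in the lattice blow-up of Definition \ref{blowup} are pairwise non-adjacent in the zero-divisor graph). If $\sigma(I)=S$ and $\sigma(J)=T$ with $S\neq T$, then $I$ and $J$ are adjacent iff $S\cap T=\emptyset$, that is, iff $S$ and $T$ are adjacent in $G(\mathbf 2^n)$, and this holds uniformly for all such pairs; hence the edges between two distinct fibres form a complete bipartite graph when the corresponding vertices of $G(\mathbf 2^n)$ are adjacent and are absent otherwise. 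This is precisely the defining pattern of a blow-up of $G(\mathbf 2^n)$ in which each vertex $S$ is replaced by the independent set $V_S$.

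It then remains to verify that every fibre is nonempty, so that the blow-up is genuinely of all of $G(\mathbf 2^n)$ and not of a proper subgraph, and this is where I expect the main work to lie. For a fixed $\emptyset\neq S\subsetneq[n]$ I would take $I_S=\bigcap_{i\in S}\mathfrak m_i$. That $\sigma(I_S)\supseteq S$ is clear, while for $j\notin S$ a containment $I_S\subseteq\mathfrak m_j$ would force $\prod_{i\in S}\mathfrak m_i\subseteq\mathfrak m_j$, and primeness of $\mathfrak m_j$ would then make $\mathfrak m_j$ equal to one of the distinct maximal ideals $\mathfrak m_i$ ($i\in S$), a contradiction; hence $\sigma(I_S)=S$, and $I_S$ is proper and not contained in $J(R)$. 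The only remaining point is that $I_S\neq 0$: if instead $\bigcap_{i\in S}\mathfrak m_i=0$, then, since distinct maximal ideals are pairwise comaximal, the Chinese Remainder Theorem gives $R\cong R/\bigl(\bigcap_{i\in S}\mathfrak m_i\bigr)\cong\prod_{i\in S}R/\mathfrak m_i$, a product of $|S|$ fields whose number of maximal ideals is $|S|<n$, contradicting $|\mathrm{Max}(R)|=n$. Hence $I_S\neq 0$ and $V_S\neq\emptyset$. Combining the three steps, $\sigma$ realises $\mathbb{CG}(R)$ as a blow-up of $G(\mathbf 2^n)$, completing the proof; the only genuinely nontrivial ingredient is the Chinese Remainder Theorem step that pins down nonemptiness of the fibres from the exact count of maximal ideals.
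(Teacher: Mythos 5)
Your proposal is correct, but note that the paper itself never proves this statement: it is quoted verbatim from M.~Ye et al.\ \cite[Theorem 3.1]{mytwl}, so there is no in-paper argument to compare against, only the original source. Your support-map strategy --- sending an ideal $I$ to $\sigma(I)=\{i\mid I\subseteq\mathfrak m_i\}$, translating the vertex condition into $\emptyset\neq\sigma(I)\subsetneq[n]$, translating comaximality into disjointness of supports, and then checking every fibre is nonempty --- is precisely the natural route and is, in essence, the one taken there; all three of your steps are sound, and you correctly identify nonemptiness of the fibres as the only point requiring ring theory rather than bookkeeping. One simplification is worth recording: your Chinese Remainder Theorem step is redundant. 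The zero ideal is contained in every maximal ideal, so $\sigma(0)=[n]$; your prime-avoidance argument establishes $\sigma(I_S)=S$ without ever assuming $I_S\neq 0$, and since $S\subsetneq[n]$ this already forces $I_S\neq 0$. Thus the single primeness computation $\prod_{i\in S}\mathfrak m_i\subseteq\mathfrak m_j\Rightarrow\mathfrak m_j=\mathfrak m_i$ for some $i\in S$ does all the work, and the detour through $R\cong\prod_{i\in S}R/\mathfrak m_i$ can be deleted.
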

	
	\begin{theorem}[{Khandekar and Joshi \cite[Theorem 5.1]{nkvj23}}]\label{comaxi}
		Let $R$ be a commutative ring with identity and let $Id(R)^\partial=L^\partial$ be the dual of the lattice $Id (R)$ of all ideals of $R$. Then $\mathbb{CG}(R)^c=G^c(Id(R)^\partial)=G^c(L^\partial)$.
	\end{theorem}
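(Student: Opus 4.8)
The plan is to prove the graph equality $\mathbb{CG}(R)^c = G^c(L^\partial)$ by matching vertex sets and edge relations separately, translating everything into the language of sums of ideals. The rightmost equality $G^c(\mathrm{Id}(R)^\partial)=G^c(L^\partial)$ is purely notational, since $L^\partial$ is defined to be $\mathrm{Id}(R)^\partial$, so the content is the first equality. I would begin by unwinding the order reversal in $L^\partial$: as stated in the preliminaries, the meet in $L^\partial$ is $I+J$ and the least element $0$ of $L^\partial$ is the ideal $R$. Hence the defining adjacency $a\wedge b=0$ of the zero-divisor graph $G(L^\partial)$ reads $I+J=R$, and passing to the complement, adjacency in $G^c(L^\partial)$ is exactly $I+J\neq R$ (for distinct $I,J$). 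This already coincides with the adjacency in $\mathbb{CG}(R)^c$, because $I,J$ are adjacent in $\mathbb{CG}(R)$ precisely when $I+J=R$. Consequently, once the two vertex sets are shown to be equal, the edge relations agree automatically and the theorem follows.

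The heart of the argument is therefore the identification of vertex sets: the vertices of $\mathbb{CG}(R)$ are the nonzero proper ideals $I$ with $I\not\subseteq J(R)$, and I must show these are exactly the elements of $Z^*(L^\partial)$. By definition, $I\in Z^*(L^\partial)$ means $I\neq R$ and there is a proper ideal $J\neq R$ with $I+J=R$. To connect the two descriptions I would invoke the standard facts $J(R)=\bigcap\{M : M\ \text{maximal}\}$ and that, in a ring with identity, every proper ideal is contained in some maximal ideal. For one inclusion, if $I\not\subseteq J(R)$ then $I\not\subseteq M$ for some maximal $M$, and maximality forces $I+M=R$; taking $J=M$ (a proper ideal) shows $I\in Z^*(L^\partial)$. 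Conversely, if $I\in Z^*(L^\partial)$, choose a proper $J\neq R$ with $I+J=R$ and a maximal ideal $M\supseteq J$; then $R=I+J\subseteq I+M$ gives $I+M=R$, so $I\not\subseteq M$ and hence $I\not\subseteq J(R)$. One also checks that $(0)$ lies in neither vertex set (if $I=(0)$ then $I+J=J\neq R$), so the ``nonzero proper'' requirement is subsumed by $I\not\subseteq J(R)$ on the ring side and by the $Z^*$ convention on the lattice side.

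I expect the only real (and still modest) obstacle to be the careful bookkeeping of the order reversal in $L^\partial$: ensuring that ``zero-divisor in $L^\partial$'' genuinely unwinds to ``comaximal with some proper ideal'' rather than to a dual condition, and that the boundary elements $R=0_{L^\partial}$ and $(0)=1_{L^\partial}$ are treated consistently when forming $Z^*(L^\partial)$. The ring-theoretic ingredients — the intersection description of $J(R)$ and the existence of maximal ideals above proper ideals — are entirely routine, so the substance of the proof lies precisely in this translation step; combining the two inclusions with the matching edge relation $I+J\neq R$ completes the identification $\mathbb{CG}(R)^c=G^c(L^\partial)$.
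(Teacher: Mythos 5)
Your proof is correct, but note that this paper contains no proof of the statement to compare against: Theorem \ref{comaxi} is quoted verbatim from Khandekar and Joshi \cite[Theorem 5.1]{nkvj23} and is used here purely as an imported ingredient. Taken on its own merits, your argument is complete and is the natural one. The key translation is right: in $L^\partial = Id(R)^\partial$ the meet of $I$ and $J$ is $I+J$ and the least element $0_{L^\partial}$ is $R$, so adjacency in $G(L^\partial)$ is exactly comaximality $I+J=R$, and hence adjacency in $G^c(L^\partial)$ is $I+J\neq R$, matching $\mathbb{CG}(R)^c$. Your identification of the vertex sets, $Z^*(L^\partial)=\{I \subsetneq R \mid I \not\subseteq J(R)\}$, is the genuine content, and both inclusions are argued correctly from the two standard facts (every proper ideal lies in a maximal ideal, and $J(R)$ is the intersection of the maximal ideals). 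You also handle the boundary elements properly: $(0)=1_{L^\partial}$ belongs to neither vertex set since $(0)+J=J\neq R$ for proper $J$, and $R=0_{L^\partial}$ is excluded on the lattice side by the $Z^*$ convention and on the ring side by properness. The only implicit hypothesis worth flagging is the use of Zorn's lemma (or finiteness/Noetherianity) to produce a maximal ideal above a given proper ideal, which is harmless in the setting of commutative rings with identity considered here.
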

	
	By Theorem \ref{compthm}, Theorem \ref{comaxiid},  and Theorem \ref{comaxi}, we have the following results.
	\begin{corollary} [{N. Abachi et al. \cite[Theorem 2.7]{abachi}}]
		Suppose $R$ is a reduced commutative ring with identity. If $dim_{M}(\Omega(R))$ is finite  and  $|\text{Max(R)}|=n\geq 3$, then  $sdim_{M}(\Omega(R))=sdim_{M}(\mathbb{CAG}(R)^{c})=sdim_{M}(\mathbb{CG}(R)^{c})=   
		2^n-2^{n-1}-1$.
	\end{corollary}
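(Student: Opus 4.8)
The plan is to collapse all three graphs appearing in the statement to a single instance of $G^c(L^B)$ in which the blow-up is \emph{trivial}, and then read off the answer from Theorem \ref{compthm}. First I would record the graph identifications. Once $R$ is known to be Artinian, Observation \ref{observ} gives $\Omega(R)=\mathbb{CAG}(R)^c=\mathbb{CG}(R)^c$, so it suffices to compute $sdim_M(\mathbb{CG}(R)^c)$. By Theorem \ref{comaxiid}, $\mathbb{CG}(R)$ is the blow-up of the zero-divisor graph of $\mathbf{2}^n$, i.e. $\mathbb{CG}(R)=G(L^B)$ for the blow-up $L^B$ of $L\cong\mathbf{2}^n$; equivalently, by Theorem \ref{comaxi}, $\mathbb{CG}(R)^c=G^c(Id(R)^\partial)=G^c(L^B)$. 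Thus all three parameters equal $sdim_M(G^c(L^B))$, and Theorem \ref{compthm} reduces the problem to computing $|Z^*(L^B)|$.

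The decisive step is to show that the hypotheses force $R\cong\prod_{i=1}^n F_i$, a product of $n$ fields, so that the blow-up is trivial. Here I would use finiteness: $Id(R)^\partial$ is an atomic $0$-distributive lattice whose atoms are exactly the $n\ge 3$ maximal ideals of $R$, so Lemma \ref{finites} turns the finiteness of $\dim_M(G^c(Id(R)^\partial))$ into the statement that $G^c(Id(R)^\partial)$ is a finite graph; hence $R$ has only finitely many (annihilating) ideals and is Artinian. A reduced Artinian ring with $|\mathrm{Max}(R)|=n$ is $\prod_{i=1}^n F_i$, whence $Id(R)\cong\mathbf{2}^n$ and, since the ideals are the products $\prod_{i\in S}F_i$ with annihilators $\prod_{i\notin S}F_i$, distinct ideals have distinct annihilators. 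Consequently the equivalence $\sim$ used in the construction is trivial, every chain of $L^B$ has length one, and $L^B\cong\mathbf{2}^n$, so $|Z^*(L^B)|=2^n-2$.

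Plugging into Theorem \ref{compthm} then yields $sdim_M(G^c(L^B))=|Z^*(L^B)|-2^{n-1}+1=(2^n-2)-2^{n-1}+1=2^n-2^{n-1}-1$, which is exactly the Boolean corollary obtained from Theorem \ref{compthm} and Corollary \ref{corol1}. Combining this with the identifications of the first paragraph gives $sdim_M(\Omega(R))=sdim_M(\mathbb{CAG}(R)^c)=sdim_M(\mathbb{CG}(R)^c)=2^n-2^{n-1}-1$, as desired.

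The main obstacle I anticipate is the potential circularity in the finiteness step: Observation \ref{observ} presupposes that $R$ is Artinian, yet Artinian-ness is precisely what I want to extract from the finite metric dimension of $\Omega(R)$. To break this loop I would argue directly, using only the definition of $\Omega(R)$, that for a reduced ring finitely many nonzero annihilating ideals already forces $R$ to be a finite product of fields; the graph identities $\Omega(R)=\mathbb{CG}(R)^c=G^c(L^B)$ and the lattice-theoretic reductions then all become legitimate a posteriori. For this structural implication I would lean on the reduced-ring hypothesis (and the cited Abachi et al. result) rather than on the graph identities themselves, so that the derivation of $R\cong\prod_{i=1}^n F_i$ is genuinely independent of Observation \ref{observ}.
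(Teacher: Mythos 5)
Your proposal is correct and follows essentially the same route as the paper: the paper's entire proof is the citation chain you assemble (Observation \ref{observ} to identify $\Omega(R)=\mathbb{CAG}(R)^c=\mathbb{CG}(R)^c$, Theorems \ref{comaxiid} and \ref{comaxi} to pass to $G^c(L^B)$, and Theorem \ref{compthm} with the trivial blow-up $L^B\cong\mathbf{2}^n$ coming from $R\cong\prod_{i=1}^n F_i$). The only difference is that you make explicit the step the paper leaves implicit --- using reducedness and finiteness of $\dim_M(\Omega(R))$ to force $R$ to be a finite product of fields, and flagging the Artinian circularity in Observation \ref{observ} --- which is a faithful (indeed more careful) reconstruction rather than a different approach.
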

	
	\begin{corollary} [{N. Abachi et al. \cite[Theorem 3.2]{abachi}}]
		Suppose that $R\cong R_1 \times R_2 \times \dots \times R_n$, where $R_i$ is an Artinian local ring such that $|A(R_i)^{*}|\geq 1$, for $1\leq i\leq n$. Then $sdim_{M}(\Omega(R))=sdim_{M}(\mathbb{CAG}(R)^{c})=sdim_{M}(\mathbb{CG}(R)^{c})=|V(\Omega(R))|-2^{n-1}+1$.
	\end{corollary}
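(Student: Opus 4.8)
The plan is to reduce the computation of $sdim_M(\Omega(R))$ to the main result, Theorem \ref{compthm}, by realizing $\Omega(R)$ as the complement of the zero-divisor graph of a blow-up $L^B$ of $\mathbf{2}^n$. First I would observe that a finite direct product of Artinian rings is Artinian, so $R=R_1\times\cdots\times R_n$ is Artinian; hence Observation \ref{observ} applies and delivers the first two equalities $sdim_M(\Omega(R))=sdim_M(\mathbb{CAG}(R)^c)=sdim_M(\mathbb{CG}(R)^c)$ at once, since in the Artinian case these three graphs literally coincide. It therefore suffices to compute $sdim_M(\mathbb{CG}(R)^c)$. Because each $R_i$ is local with a unique maximal ideal $\mathfrak{m}_i$, the maximal ideals of $R$ are exactly the $n$ ideals $R_1\times\cdots\times\mathfrak{m}_i\times\cdots\times R_n$ for $1\le i\le n$, so $|\mathrm{Max}(R)|=n$. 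The hypothesis $|A(R_i)^{*}|\ge 1$ forces each factor to possess a nonzero annihilating ideal, i.e.\ no $R_i$ is a field, so every atom of $Id(R)^{\partial}$ is genuinely expanded; we work under the standing convention $n\ge 3$ so that Theorem \ref{star} (and hence Theorem \ref{compthm}) is available and $\mathbb{CG}(R)^c$ is connected.

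Next I would invoke Theorem \ref{comaxiid}: since $|\mathrm{Max}(R)|=n$ with $2\le n<\infty$, the comaximal ideal graph $\mathbb{CG}(R)$ is a blow-up of the zero-divisor graph of the Boolean lattice $\mathbf{2}^n$. Combining this with the lattice-theoretic identification of Theorem \ref{comaxi}, namely $\mathbb{CG}(R)^c=G^c(Id(R)^{\partial})=G^c(L^{\partial})$, exhibits $L^{\partial}$ as a blow-up $L^B$ of $\mathbf{2}^n$, so that $\mathbb{CG}(R)^c=G^c(L^B)$. At this point Theorem \ref{compthm} applies directly and yields $sdim_M(G^c(L^B))=|Z^{*}(L^B)|-2^{n-1}+1$. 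Finally, since $V(\Omega(R))=V(\mathbb{CG}(R)^c)=V(G^c(L^B))=Z^{*}(L^B)$, we have $|Z^{*}(L^B)|=|V(\Omega(R))|$, and substituting gives $sdim_M(\Omega(R))=|V(\Omega(R))|-2^{n-1}+1$, as claimed.

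The main obstacle is the identification $\mathbb{CG}(R)^c=G^c(L^B)$: one must reconcile the \emph{graph} blow-up of $G(\mathbf{2}^n)$ produced by Theorem \ref{comaxiid} with the \emph{lattice} blow-up $L^B$ of Definition \ref{blowup}, checking that the blow-up of the zero-divisor graph coincides with the zero-divisor graph $G(L^B)$ of the blow-up lattice. The decisive point is that, for $I=\prod I_i$ and $J=\prod J_i$ proper ideals of $R$, comaximality $I+J=R$ holds if and only if no coordinate is simultaneously proper in both, so the comaximality pattern depends only on the set of coordinates in which the ideal is proper; this is exactly what makes $[L^B]\cong\mathbf{2}^n$ (Corollary \ref{pseudo}) and places all ideals sharing a given pattern into one equivalence class under $\sim$, matching the structure used in Theorem \ref{compthm}. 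The remaining steps — the product-ring computation of $\mathrm{Max}(R)$ and the bookkeeping $|V(\Omega(R))|=|Z^{*}(L^B)|$ — are routine once this identification is established.
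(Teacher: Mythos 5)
Your proposal is correct and takes essentially the same route as the paper, which obtains this corollary by chaining Observation \ref{observ} (the Artinian identification $\Omega(R)=\mathbb{CAG}(R)^{c}=\mathbb{CG}(R)^{c}$), Theorem \ref{comaxiid}, Theorem \ref{comaxi}, and Theorem \ref{compthm}, exactly as you do. The details you add --- computing $\mathrm{Max}(R)$ for the product of local rings, reading $|A(R_i)^{*}|\geq 1$ as ``no $R_i$ is a field,'' and reconciling the graph blow-up of Theorem \ref{comaxiid} with the lattice blow-up of Definition \ref{blowup} --- are precisely the steps the paper leaves implicit.
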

	
	\subsection{Maximal graph of a ring}
	\indent In \cite{pdsb}, Sharma and Bhatwadekar  introduced a graph $\Gamma_{0}(R)$ associated with a commutative ring $R$ with identity, where the vertices represent the elements of $R$. Two distinct vertices $x$ and $y$ are adjacent if and only if  $Rx + Ry=R$. 
	
	Maimani et al. \cite{hm} investigated the subgraphs $\Gamma_1(R)$, $\Gamma_2(R)$, and $\Gamma_2^{\prime}(R) = \Gamma_2(R) \setminus J(R)$. Here, $\Gamma_1(R)$ is the subgraph of $\Gamma(R)$ induced on the set of units of $R$, $\Gamma_2(R)$ is the subgraph induced on the set of non-units of $R$, and $\Gamma_2^{\prime}(R)$ is the subgraph induced on the set of non-units of $R$ that are not in the Jacobson radical $J(R)$, i.e., $\Gamma_2^{\prime}(R) = \Gamma(R) \setminus(U(R) \cup J(R))$.

	Moconja and Petrovi\'{c} \cite{smm} demonstrated that comaximal graphs are blow-ups of Boolean graphs, which are the zero-divisor graphs of Boolean rings, or equivalently, Boolean lattices. However, the explicit construction of a Boolean lattice was not provided. The following result is essentially proven in \cite{pgg}.
	
	\begin{theorem}[{Gadge et al. \cite[Theorem 3.16]{pgg}}]\label{comax}
		Let $R$ be a finite commutative ring with identity such that $|Max(R)|=n$. Then $\Gamma_2'(R)^c= G^c({L}^B)$, where $L^B$ is the blow-up of a Boolean lattice $L\cong \mathbf{2}^n$. 
	\end{theorem}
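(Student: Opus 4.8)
The plan is to pass to the complement and prove the equivalent statement that $\Gamma_2'(R)\cong G(L^B)$ for a suitable blow-up $L^B$; since complementation is carried out on the common vertex set, an isomorphism $\Gamma_2'(R)\cong G(L^B)$ immediately yields $\Gamma_2'(R)^c\cong G^c(L^B)$. The first step is to invoke the structure theorem for finite commutative rings to write $R\cong R_1\times\cdots\times R_n$, where each $R_i$ is a finite local ring; here $n=|\mathrm{Max}(R)|$, the maximal ideals are $\mathfrak{m}_i=R_1\times\cdots\times\mathfrak{m}_{R_i}\times\cdots\times R_n$, the units are $U(R)=\prod_i U(R_i)$, and $J(R)=\prod_i\mathfrak{m}_{R_i}$.

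Next I would reduce adjacency in $\Gamma_2'(R)$ to a purely combinatorial condition. To each $x=(x_1,\dots,x_n)$ associate its \emph{support} $S(x)=\{i : x_i\notin U(R_i)\}\subseteq\{1,\dots,n\}$, which records exactly the maximal ideals containing $x$. An element $x$ is a vertex of $\Gamma_2'(R)$ (a non-unit outside $J(R)$) precisely when $S(x)$ is a proper nonempty subset, and for two vertices $x,y$ one checks that $Rx+Ry=R$ holds iff no maximal ideal contains both, i.e. iff $S(x)\cap S(y)=\emptyset$. In particular, two vertices with the same support are never adjacent, so each support class is an independent set, while adjacency between distinct classes depends only on disjointness of supports. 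Identifying the proper nonempty subset $S(x)$ with the corresponding element of $\mathbf{2}^n$ (which is neither $0$ nor $1$), this exhibits $\Gamma_2'(R)$ as the graph obtained from the zero-divisor graph $G(\mathbf{2}^n)$ --- where $a\sim b$ iff $a\wedge b=0$, i.e. iff the two subsets are disjoint --- by replacing each vertex $S$ with an independent set of size $m_S:=\prod_{i\in S}|\mathfrak{m}_{R_i}|\cdot\prod_{i\notin S}|U(R_i)|$, the number of elements of support $S$ (which is at least $1$).

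Finally I would realize this graph blow-up as the zero-divisor graph of a lattice blow-up. Following Definition \ref{blowup}, construct $L^B$ from $L\cong\mathbf{2}^n$ by replacing the element corresponding to each proper nonempty subset $S$ by a chain $C_S$ with exactly $m_S$ elements (and sending $0,1$ to $\mathbf 0,\mathbf 1$). By Remark \ref{pcbool}, the meet in $L^B$ of an element on $C_S$ with an element on $C_T$ (for $S\neq T$) equals $S\wedge T$ computed in $\mathbf{2}^n$, namely $S\cap T$, while two elements on the same chain have nonzero meet. Hence an element of $L^B$ lying on $C_S$ is a vertex of $G(L^B)$ exactly when $S$ is proper and nonempty (its meet with any element on $C_{\{1,\dots,n\}\setminus S}$ is $\mathbf 0$), so $Z^*(L^B)$ is the disjoint union of the chains $C_S$ over proper nonempty $S$, with same-chain elements non-adjacent and cross-chain elements adjacent iff $S\cap T=\emptyset$. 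Matching the chain $C_S$ with the support class $\{x : S(x)=S\}$ therefore gives a graph isomorphism $\Gamma_2'(R)\cong G(L^B)$, and complementing finishes the proof. I expect the last step to be the main obstacle: one must verify that the free chain-length parameters of the lattice blow-up can be chosen to reproduce exactly the support-class sizes $m_S$ and that the meet structure of $L^B$ via Remark \ref{pcbool} reproduces the disjointness adjacency of $\Gamma_2'(R)$ on the nose, so that the two graphs coincide rather than merely share the common quotient $G(\mathbf{2}^n)$.
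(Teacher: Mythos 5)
Your proof is correct. There is, however, nothing in this paper to compare it against: Theorem \ref{comax} is quoted without proof, being imported from Gadge, Khandekar and Joshi \cite[Theorem 3.16]{pgg}, so the only comparison possible is with the cited source's approach. Your argument --- decomposing the finite ring as $R\cong R_1\times\cdots\times R_n$ with each $R_i$ local, classifying vertices of $\Gamma_2'(R)$ by the support $S(x)=\{i : x_i\notin U(R_i)\}$ so that the vertex condition becomes $\emptyset\neq S(x)\subsetneq\{1,\dots,n\}$ and comaximality $Rx+Ry=R$ becomes disjointness $S(x)\cap S(y)=\emptyset$, and then matching each support class (of size $m_S=\prod_{i\in S}|\mathfrak{m}_{R_i}|\cdot\prod_{i\notin S}|U(R_i)|$) with a chain of exactly that many elements in a blow-up $L^B$ of $\mathbf{2}^n$, using Remark \ref{pcbool} to see that cross-chain meets are computed in $\mathbf{2}^n$ while same-chain elements are never adjacent --- is the standard blow-up argument that underlies the cited result and its ideal-theoretic analogue, Theorem \ref{comaxiid} of Ye et al.\ \cite{mytwl}, so no genuinely different method is involved; the one notational point worth flagging is that the asserted equality $\Gamma_2'(R)^c=G^c(L^B)$ is to be read as equality up to the identification of each support class with its chain, which is precisely the isomorphism you construct.
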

	
	In \cite{asag}, Gaur and Sharma studied the maximal graph of a ring $R$, where the vertices represent the elements of $R$, and two distinct vertices $x$ and $y$ are adjacent if and only if both are contained in a maximal ideal of $R$. They noted that the maximal graph of a ring $R$ is equivalent to the $\Gamma_{2}'(R)^c$, complement of the comaximal graph of $R$.
	
	The following result follows from  Theorem \ref{comax} and Theorem \ref{compthm}. 
	
	\begin{theorem}
		Let $\Gamma_{2}'(R)^c$ be the maximal graph of a commutative ring $R$ with identity and $|\mathrm{Max}(R)|=n$, $n\geq 3$. Then $sdim_{M}(\Gamma_{2}'(R)^c)=|V(\Gamma_{2}'(R)^c)|-2^{n-1}+1$.
	\end{theorem}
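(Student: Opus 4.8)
The plan is to derive this statement as a direct composition of two results already in hand, namely the structural identification in Theorem~\ref{comax} and the strong metric dimension formula of Theorem~\ref{compthm}. First I would invoke Theorem~\ref{comax}: since $R$ is a commutative ring with identity and $|\mathrm{Max}(R)|=n$, the maximal graph $\Gamma_2'(R)^c$ coincides with $G^c(L^B)$, where $L^B$ is the blow-up of the Boolean lattice $L\cong\mathbf{2}^n$. This is the essential translation step, moving the problem out of the ring-theoretic setting and into the lattice-theoretic framework in which Theorem~\ref{compthm} was established. Because the two objects are literally the same graph, their strong metric dimensions are equal, so it suffices to compute $sdim_M(G^c(L^B))$.

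Applying Theorem~\ref{compthm} to the blow-up $L^B$ gives
\[
sdim_M\big(\Gamma_2'(R)^c\big)=sdim_M\big(G^c(L^B)\big)=|Z^*(L^B)|-2^{n-1}+1.
\]
The last thing to do is to re-express the count $|Z^*(L^B)|$ in terms of the vertex set of the maximal graph. By the definition of the zero-divisor graph and its complement, the vertex set of $G^c(L^B)$ is exactly $Z^*(L^B)$, the set of nonzero zero-divisors of $L^B$; and since the identification $\Gamma_2'(R)^c=G^c(L^B)$ is an equality of graphs, the two share this common vertex set. Hence $|V(\Gamma_2'(R)^c)|=|V(G^c(L^B))|=|Z^*(L^B)|$, and substituting into the displayed equation yields the claimed formula.

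I do not anticipate any genuine obstacle, as the argument is a routine chaining of two established theorems. The only points requiring care are bookkeeping ones: that the vertex-set cardinalities are correctly matched under the identification of Theorem~\ref{comax}, and that the standing hypotheses are satisfied. In particular, the condition $n\geq 3$ guarantees (via Remark~\ref{rem4.6} and Theorem~\ref{star}) that $G^c(L^B)$ is connected with $\mathrm{diam}\big(G^c(L^B)\big)=2$, which is exactly what underlies the validity of Theorem~\ref{compthm}; thus the hypothesis $n\geq 3$ in the statement is precisely what is needed to apply that formula.
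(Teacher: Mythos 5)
Your proposal is correct and follows exactly the paper's route: the paper proves this theorem precisely by combining Theorem~\ref{comax} (identifying $\Gamma_2'(R)^c$ with $G^c(L^B)$) with Theorem~\ref{compthm}, using the fact that $V(G^c(L^B))=Z^*(L^B)$. Your additional remarks on the vertex-set bookkeeping and on the role of the hypothesis $n\geq 3$ (connectivity and diameter $2$ via Remark~\ref{rem4.6} and Theorem~\ref{star}) simply make explicit what the paper leaves implicit.
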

	
	\subsection{Complement of the Zero-divisor graph of a reduced ring}

	Now, we compute the strong metric dimension of the complement of the zero-divisor graph of a reduced ring.

	\begin{theorem}[{\cite[Remark 3.4]{jlkr}}, {\cite[Lemma 3.3]{sdj1}}]\label{zero}
		Let $\Gamma(R)$ be the ring-theoretic zero-divisor graph of a finite reduced commutative ring  $R$ with identity. Then $\Gamma(R)$ equals to the lattice-theoretic zero-divisor graph of $G(\prod_{i=1}^{n} C_i) $, where $C_i$'s are the chains with $|C_i|=|F_i|$, where $R=\prod_{i=1}^{n}F_i$ ($F_i$'s are finite fields.).
	\end{theorem}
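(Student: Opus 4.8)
The plan is to combine the structure theorem for finite reduced commutative rings with an explicit coordinatewise bijection. First I would recall that a finite reduced commutative ring with identity is a finite direct product of finite fields, so we may write $R=\prod_{i=1}^{n}F_i$ with each $F_i$ a finite field; this decomposition is already built into the hypothesis. Writing an element as $x=(x_1,\dots,x_n)$, multiplication is coordinatewise, and because each $F_i$ is a field the entry $x_iy_i$ vanishes exactly when $x_i=0$ or $y_i=0$. Hence $xy=0$ in $R$ if and only if the supports $\mathrm{supp}(x)=\{i : x_i\neq 0\}$ and $\mathrm{supp}(y)$ are disjoint, and $x\in Z^*(R)$ precisely when $x\neq 0$ and $\mathrm{supp}(x)\subsetneq\{1,\dots,n\}$ (that is, $x$ is a nonzero non-unit).

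Next I would run the identical analysis on the lattice side. For the product of chains $L=\prod_{i=1}^{n}C_i$, each $C_i$ having bottom element $0$, the meet is computed coordinatewise as $(a\wedge b)_i=\min(a_i,b_i)$, which equals $0$ exactly when $a_i=0$ or $b_i=0$. Thus $a\wedge b=0$ in $L$ if and only if $\mathrm{supp}(a)\cap\mathrm{supp}(b)=\emptyset$, and $a\in Z^*(L)$ precisely when $a\neq 0$ and $\mathrm{supp}(a)\subsetneq\{1,\dots,n\}$. The two descriptions are formally the same: in both a field and a chain a coordinate contributes $0$ to the product (respectively meet) iff that coordinate is $0$, and the global product (respectively meet) vanishes iff the two supports are disjoint.

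To produce the isomorphism I would exploit the cardinality condition $|C_i|=|F_i|$. For each $i$ fix a bijection $\psi_i\colon F_i\to C_i$ with $\psi_i(0)=0$; such a bijection exists precisely because the two sets have equal cardinality, and injectivity then forces $\psi_i(x_i)=0\iff x_i=0$. The product map $\Psi=(\psi_1,\dots,\psi_n)\colon R\to L$ is therefore a bijection preserving supports, $\mathrm{supp}(\Psi(x))=\mathrm{supp}(x)$, so it restricts to a bijection $Z^*(R)\to Z^*(L)$. Moreover, for any $x,y$ we have $xy=0\iff\mathrm{supp}(x)\cap\mathrm{supp}(y)=\emptyset\iff\Psi(x)\wedge\Psi(y)=0$, so $\Psi$ is adjacency-preserving. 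This exhibits the graph isomorphism $\Gamma(R)\cong G\!\left(\prod_{i=1}^{n}C_i\right)$.

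The argument is essentially complete once these observations are assembled, so there is no genuine obstacle; the instructive point is that a field $F_i$ and a chain $C_i$ have entirely different algebraic structure, and the graphs coincide only because the zero-divisor relation depends solely on the zero/nonzero pattern of coordinates together with the common number $|F_i|-1=|C_i|-1$ of nonzero entries available in each slot. Making this dependence explicit, namely that adjacency is a function of supports alone, is the conceptual heart of the proof, after which the support-preserving bijection $\Psi$ finishes it immediately.
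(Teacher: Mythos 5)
Your proof is correct and complete: the coordinatewise characterization of zero-divisors by supports on both the ring side and the chain-product side, followed by the support-preserving bijection $\Psi=(\psi_1,\dots,\psi_n)$ built from $|C_i|=|F_i|$, establishes the claimed graph identification. Note, however, that the paper itself gives no proof of this statement; it is quoted directly from the cited references (LaGrange--Roy, Remark 3.4, and Devhare--Joshi--LaGrange, Lemma 3.3), so there is no in-paper argument to compare against. Your argument is the standard one underlying those references --- the observation that in a finite reduced ring $\prod_{i=1}^{n}F_i$ adjacency in $\Gamma(R)$ depends only on the zero/nonzero pattern of coordinates, exactly as meet-to-zero does in $\prod_{i=1}^{n}C_i$ --- and it serves as a correct, self-contained justification of the quoted result.
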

	
	The following result follows from Theorem \ref{compthm} and Theorem \ref{zero}.
	\begin{theorem}\label{reduced}
		Let  $R \cong \prod_1^n \mathrm{~F}_i$, where $F_i $ is a field for every $1 \leq i \leq n$, then $\operatorname{sdim}_M(\Gamma^c(R))=$ $\left|Z^*(R)\right|-2^{n-1}+1$.
		
	\end{theorem}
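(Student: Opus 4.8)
The plan is to reduce $\Gamma^c(R)$ to the graph $G^c(L^B)$ already analysed in Theorem~\ref{compthm} and then transport the formula across that identification. First I would invoke Theorem~\ref{zero}: since $R \cong \prod_{i=1}^n F_i$ is a finite reduced commutative ring, its ring-theoretic zero-divisor graph satisfies $\Gamma(R) = G\bigl(\prod_{i=1}^n C_i\bigr)$, where each $C_i$ is a chain with $|C_i| = |F_i|$. In particular the two graphs share a vertex set, so $|Z^*(R)| = |V(\Gamma(R))| = \bigl|V\bigl(G(\prod_{i=1}^n C_i)\bigr)\bigr|$.

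Next I would identify $\prod_{i=1}^n C_i$ with a blow-up of $\mathbf{2}^n$ at the level of zero-divisor graphs. The lattice $\prod_{i=1}^n C_i$ is a finite product of chains, hence distributive and therefore $0$-distributive; moreover its atoms are exactly the $n$ tuples having a single nonzero atomic coordinate, so it has precisely $n$ atoms. Applying Theorem~\ref{0-disblowup} then yields $G\bigl(\prod_{i=1}^n C_i\bigr) = G(L^B)$ for the blow-up $L^B$ of $L \cong \mathbf{2}^n$ whose chain $C_S$ is chosen of length equal to the support-class size $\prod_{i \in S}(|F_i|-1)$. Combining this with the previous step gives $\Gamma(R) = G(L^B)$, and taking complements on the common vertex set produces $\Gamma^c(R) = G^c(L^B)$.

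Finally I would apply the main theorem. Under the standing hypothesis $n \geq 3$, Theorem~\ref{compthm} gives $\operatorname{sdim}_M(G^c(L^B)) = |Z^*(L^B)| - 2^{n-1} + 1$. Since $\Gamma(R) = G(L^B)$ is an equality of graphs on the same vertex set, we have $|Z^*(L^B)| = |Z^*(R)|$, whence $\operatorname{sdim}_M(\Gamma^c(R)) = |Z^*(R)| - 2^{n-1} + 1$, as claimed.

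As for difficulty, the argument is essentially a chain of identifications, so I anticipate no genuine computational obstacle. The one point requiring care is the passage $G\bigl(\prod_{i=1}^n C_i\bigr) = G(L^B)$: I must verify that $\prod_{i=1}^n C_i$ truly meets the hypotheses of Theorem~\ref{0-disblowup} (finiteness, $0$-distributivity, exactly $n$ atoms) and that the equivalence $\sim$ collapses each support class so that the cardinality $|Z^*(R)|$ is preserved when moving to $L^B$. Everything beyond this verification is routine bookkeeping.
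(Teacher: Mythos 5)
Your proposal is correct and follows essentially the same route as the paper, whose entire proof is the one-line observation that the result follows from Theorem \ref{zero} and Theorem \ref{compthm}. Your write-up simply makes explicit the intermediate step the paper leaves implicit, namely that $\prod_{i=1}^n C_i$ is a finite $0$-distributive lattice with exactly $n$ atoms, so Theorem \ref{0-disblowup} gives $G\bigl(\prod_{i=1}^n C_i\bigr)=G(L^B)$ and hence $\Gamma^c(R)=G^c(L^B)$ on a common vertex set.
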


	\subsection{Intersection graph of a ring}
	Let $R$ be a commutative ring with identity.
	The \textit{intersection graph of ideals}, denoted by $\mathbb{IG}(R)$, is defined for a commutative ring $R$ with identity. The vertex set of this graph consists of all nonzero proper ideals of $R$, where two distinct vertices $I$ and $J$ are adjacent if and only if  $I \cap J \neq {0}$  (see \cite{cgms2009}).
	
	Now, let us consider the zero-divisor graph $G^*(Id(R))$ of the lattice of ideals $Id(R)$. Clearly, the vertex set of $G^*(Id(R))$ is  $Id(R) \setminus \{0_{Id(R)}, 1_{Id(R)}\}$, which represents the set of all nonzero proper ideals of $R$. As a result, we have the equality $V(\mathbb{IG}(R)) = V(G^*(Id(R)))=V(G^{*c}(Id(R)))$.
	
	In $G^*(Id(R))$, two vertices $I$ and $J$ are adjacent if and only if $I \wedge J = 0_{Id(R)}$, which is equivalent to $I \cap J = \{0\}$. In contrast, in $G^{*c}(Id(R))$, the ideals $I$ and $J$ are adjacent  whenever $I \cap J \neq \{0\}$. Therefore, the following result holds.
	
	\begin{lemma}[{Khandekar and Joshi \cite[Lemma 5.5]{nkvj23}}]\label{ideal} Let $R$ be a  commutative ring with identity. Then  $\mathbb{IG}(R)={G}^{*c}(Id(R))$.
	\end{lemma}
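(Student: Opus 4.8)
The plan is to establish the identity of the two graphs by verifying that they share a common vertex set and a common adjacency relation; since a finite (or indeed any) graph is completely determined by its vertices together with its edges, this suffices to conclude $\mathbb{IG}(R)=G^{*c}(Id(R))$.

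First I would pin down the vertex sets. By definition, $V(\mathbb{IG}(R))$ is the collection of all nonzero proper ideals of $R$. On the other hand, $G^*(Id(R))$ is the zero-divisor graph of the lattice $Id(R)$, whose least and greatest elements are the ideals $(0)$ and $R$, respectively; hence its vertex set is $Id(R)\setminus\{(0),R\}$, which is again precisely the set of nonzero proper ideals. Passing to the complement leaves the vertex set untouched, so $V(\mathbb{IG}(R))=V(G^*(Id(R)))=V(G^{*c}(Id(R)))$.

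Next I would compare the edges. In the lattice $Id(R)$ the meet of two ideals $I$ and $J$ is their intersection $I\cap J$, and the bottom element $0_{Id(R)}$ is the zero ideal $\{0\}$. Thus in the zero-divisor graph $G^*(Id(R))$ two distinct vertices $I$ and $J$ are adjacent exactly when $I\wedge J=0_{Id(R)}$, i.e. when $I\cap J=\{0\}$. Taking the complement reverses adjacency, so in $G^{*c}(Id(R))$ the ideals $I$ and $J$ are adjacent precisely when $I\cap J\neq\{0\}$. This is, word for word, the defining adjacency condition of $\mathbb{IG}(R)$. Hence the two graphs have identical edge sets, and therefore $\mathbb{IG}(R)=G^{*c}(Id(R))$.

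There is no substantive obstacle here: the result is a direct unwinding of the three definitions, the one genuine observation being that the lattice-theoretic meet in $Id(R)$ coincides with set-theoretic intersection. The only point demanding care is the bookkeeping of the extreme elements of the lattice, namely confirming that deleting $0_{Id(R)}$ and $1_{Id(R)}$ returns exactly the nonzero proper ideals, so that the two vertex sets genuinely agree.
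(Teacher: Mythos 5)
Your proof is correct and follows essentially the same route as the paper, which likewise just unwinds the three definitions: both identify the common vertex set (all nonzero proper ideals of $R$) and observe that since the meet in $Id(R)$ is intersection, adjacency in $G^{*c}(Id(R))$ means $I\cap J\neq\{0\}$, which is precisely the adjacency rule of $\mathbb{IG}(R)$. The one point you and the paper both treat as definitional is that $G^{*}(Id(R))$ denotes the zero-divisor graph whose vertex set is all of $Id(R)\setminus\{0_{Id(R)},1_{Id(R)}\}$ rather than only the lattice zero-divisors; with that convention fixed, the argument is the same direct verification in both cases.
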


	The following discussion can be found in \cite{djl2019}.
	
	\medskip
	
	\noindent
	Recall that a commutative ring $R$ with identity is called a \emph{special principal ideal ring} (abbreviated as \emph{SPIR}) if it is a local Artinian principal ideal ring, as defined in \cite{th1968}. Let $M$ be a maximal ideal of an SPIR $R$. Then there exists a natural number $n$ such that $M^n = \{0\}$, $M^{n-1} \neq \{0\}$, and every ideal $I$ of $R$ is of the form $I = M^i$ for some $i \in \{0,1,\dots,n\}$ \cite[Proposition~4]{th1968}. Thus, $M$ is a nilpotent ideal with nilpotency index $n$. Moreover, the lattice of ideals of $R$ is isomorphic to a chain $C_{n+1}$ of length $n$ (i.e., a totally ordered set with $n+1$ distinct elements).
	
	\medskip
	
	\noindent
	According to \cite[Lemma~10]{th1968}, a commutative ring $R$ is an Artinian principal ideal ring if and only if there exist SPIRs $R_1, \dots, R_n$ such that
	$
	R \cong R_1 \times \cdots \times R_n,
	$
	as also stated in the structure theorem for Artinian rings \cite[Theorem~8.7]{am1969}. Therefore, for an Artinian principal ideal ring $R$, the lattice of ideals $\mathrm{Id}(R)$ is isomorphic to the product of chains, denoted by
	$	\mathbf{C} = \prod_{i=1}^n C_{n_i+1}$,
		where each $C_{n_i+1}$ is a chain of length $n_i$, corresponding to the nilpotency index of the maximal ideal $M_i$ in $R_i$.

	Let $R$ be a commutative ring  with finitely many ideals. Suppose $	R \cong R_1 \times \cdots \times R_n$, where $R_i$ is a PIR non-field. We denote the number of proper ideals of $R_i$ by $|I(R_i)|$. Let $|I(R_i)|=n_i$ for every $1\leq i\leq n$ and $n\geq 2$ is a positive integer.  Assume that $I=I_1 \times \cdots \times I_n$ and $J=J_1 \times \cdots \times J_n$ are vertices of $\mathbb{IG}(R)$, where $I_i, J_i \in R_i$, for every $1 \leq i \leq n$. Let $NZC(I)$ denotes the number of zero components of  $I$. 
	
	We denote $A_i=\{I\in V(\mathbb{IG}(R))\mid NZC(I)=i\}$, for $0\leq i\leq n-1$ and $V(\mathbb{IG}(R))=V(G^{*c}(Id(R)))=\bigl(\bigcup\limits_{i=0}^{n-1} A_i\bigr)\setminus R$. Also, note that for every $I,J\in A_0\setminus 
	R$, we have $I\wedge J\neq 0_{Id(R)}$ and $|A_0|=\prod_{i=1}^{n}(n_i+1)-1$. 
	
	Thus $\mathbb{IG}(R)=G^{*c}(Id(R))=G^c(Id(R))\vee K_{\prod_{i=1}^{n}(n_i+1)-1}$.

	The following two results follows from the above discussion and by Theorem \ref{0-disblowup}, Lemma \ref{cjoin},  Theorem \ref{compthm} and Lemma \ref{ideal}.
	
	\begin{corollary}[{Dodongeh et al. \cite[Theorem 2.1]{inter}}]\label{inters}
		Let $n\geq 2$ be a positive integer and $R\cong \prod_{i=1}^{n}F_{i}$, where $F_{i}$ is a field for every $1\leq i \leq n$. Then $ sdim_{M}(\mathbb{IG}(R))=2^{n}-2^{n-1}-1$.
	\end{corollary}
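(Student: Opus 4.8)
The plan is to recognize $\mathbb{IG}(R)$ as the complement of the zero-divisor graph of a Boolean lattice and then read the answer off from Theorem \ref{compthm}. First I would observe that each $F_i$, being a field, has exactly the two ideals $0$ and $F_i$; hence $Id(F_i)$ is the two-element chain, and $Id(R) = \prod_{i=1}^{n} Id(F_i) \cong \mathbf{2}^n$ is a finite $0$-distributive (indeed Boolean) lattice with exactly $n$ atoms. By Lemma \ref{ideal}, $\mathbb{IG}(R) = G^{*c}(Id(R))$. Since no $F_i$ possesses a nonzero proper ideal, the only ideal of $R$ with all components nonzero is $R$ itself, so $A_0 \setminus R = \varnothing$ and the clique factor $K_{\prod_{i=1}^{n}(n_i+1)-1}$ in the decomposition $\mathbb{IG}(R) = G^c(Id(R)) \vee K_{\prod(n_i+1)-1}$ collapses to $K_0$. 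Thus the join is trivial (so that Lemma \ref{cjoin} contributes nothing here), and $\mathbb{IG}(R) = G^c(Id(R)) = G^c(\mathbf{2}^n)$.

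Next I would bring in the blow-up machinery. Since $Id(R) \cong \mathbf{2}^n$ is finite $0$-distributive with $n$ atoms, Theorem \ref{0-disblowup} gives $G(Id(R)) = G(L^B)$ for the blow-up $L^B$ of $\mathbf{2}^n$; here every replacing chain is trivial, so $L^B \cong \mathbf{2}^n$. For $n \geq 3$, Theorem \ref{compthm} then yields
\[
sdim_{M}(\mathbb{IG}(R)) = sdim_{M}(G^c(\mathbf{2}^n)) = |Z^*(\mathbf{2}^n)| - 2^{n-1} + 1.
\]
Because the only nonzero element of $\mathbf{2}^n$ that fails to be a zero-divisor is the top element $1$, we have $|Z^*(\mathbf{2}^n)| = 2^n - 2$, and substitution gives $(2^n - 2) - 2^{n-1} + 1 = 2^n - 2^{n-1} - 1$, the desired value.

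The step needing the most care is the boundary case $n = 2$, which is exactly where the standing hypothesis $n \geq 3$ of Theorem \ref{compthm} and the connectedness hypothesis of Theorem \ref{gala} break down. Here $\mathbf{2}^2$ has exactly two atoms, so by Theorem \ref{star} the graph $G^c(\mathbf{2}^2)$ is a disjoint union of two complete graphs; concretely $\mathbb{IG}(R) = \overline{K_2}$, two isolated vertices, and is disconnected. I would dispose of this case by direct inspection: each vertex has empty neighborhood, so the two vertices are (vacuously) mutually maximally distant, whence the strong resolving graph is $K_2$ and $\alpha(K_2) = 1 = 2^2 - 2^1 - 1$, in agreement with the formula. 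With this caveat handled, the preceding two paragraphs settle all $n \geq 3$ and complete the proof.
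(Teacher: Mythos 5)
For $n\geq 3$ your argument is correct and is essentially the paper's own proof: the paper's justification is nothing more than a citation of Lemma \ref{ideal}, the join decomposition discussion (in which, for fields, the clique factor is empty, exactly as you observe via $A_0\setminus\{R\}=\emptyset$), Theorem \ref{0-disblowup} with trivial chains, and Theorem \ref{compthm}, followed by the computation $|Z^*(\mathbf{2}^n)|-2^{n-1}+1=(2^n-2)-2^{n-1}+1=2^n-2^{n-1}-1$. So on this range the two arguments coincide.

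Your $n=2$ paragraph, however, contains a genuine gap. What you call ``direct inspection'' is in fact an application of the identity $sdim_M(G)=\alpha(G_{SR})$, i.e.\ Theorem \ref{gala}, to a disconnected graph --- precisely the application whose connectedness hypothesis you had just conceded fails. The failure is not cosmetic: under the paper's definition, a vertex $w$ strongly resolves $u$ and $v$ only by means of a shortest $u$--$w$ path containing $v$ or a shortest $v$--$w$ path containing $u$. When $u$ and $v$ are the two isolated vertices of $\overline{K_2}$, no such path exists for any choice of $w$ (including $w=u$ or $w=v$, whose only shortest path to itself is trivial), so \emph{no} set of vertices strongly resolves this pair, and $\overline{K_2}$ admits no strong resolving set at all; its strong metric dimension is simply not defined by the paper's conventions. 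Declaring $u$ and $v$ ``vacuously mutually maximally distant'' and taking a vertex cover of $K_2$ does not repair this, since the bridge from $\alpha(G_{SR})$ back to $sdim_M$ is exactly the theorem that is unavailable. To be fair, this defect lies in the statement itself (inherited from the cited source): the paper's blanket citation also proves nothing for $n=2$, since Theorem \ref{compthm} carries the standing hypothesis $n\geq 3$. The honest options are to restrict the corollary to $n\geq 3$, or to state explicitly a convention for strong resolution in disconnected graphs and verify the $n=2$ value under that convention; neither is available from the paper as written.
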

	
	\begin{corollary}[{Dodongeh et al. \cite[Theorem 3.1]{inter}}]\label{inter}
		Let $R$ ba a commutative ring with identity having finitely many ideals. Suppose that $R \cong \prod_{i=1}^n R_i$, where $R_i$ is a PIR non-field for every $1 \leq i \leq n$ and $n \geq 2$ is a positive integer. Then $ sdim_{M}(\mathbb{IG}(R))=|V(\mathbb{IG}(R))|-2^{n-1}$.	\end{corollary}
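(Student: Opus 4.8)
The plan is to reduce everything to the strong resolving graph of a blow-up, which has already been analyzed in Theorem \ref{compthm}. By Lemma \ref{ideal} we identify $\mathbb{IG}(R)$ with $G^{*c}(Id(R))$, and the discussion preceding the statement supplies the key structural decomposition
\[
\mathbb{IG}(R)=G^{*c}(Id(R))=G^{c}(Id(R))\vee K_t,\qquad t=\textstyle\prod_{i=1}^{n}(n_i+1)-1=|A_0\setminus R|,
\]
where the vertices of $A_0\setminus R$ (the nonzero proper ideals all of whose components are nonzero) form a clique completely joined to the remaining vertices. Since $R\cong\prod_{i=1}^n R_i$ with each $R_i$ an Artinian PIR, $Id(R)\cong\prod_{i=1}^n C_{n_i+1}$ is a finite product of chains, hence a finite distributive (in particular $0$-distributive) lattice with exactly $n$ atoms. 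Thus Theorem \ref{0-disblowup} applies and $G(Id(R))=G(L^B)$, where $L^B$ is the blow-up of $\mathbf 2^n$; taking complements gives $G^c(Id(R))=G^c(L^B)$, so $\mathbb{IG}(R)=G^c(L^B)\vee K_t$.

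Next I would pass to strong resolving graphs. Applying Lemma \ref{cjoin} with $L=L^B$ (legitimate for $n\ge 3$, when $L^B$ is atomic $0$-distributive with at least three atoms by Remark \ref{rem4.6}) yields
\[
\mathbb{IG}(R)_{SR}=(G^c(L^B)\vee K_t)_{SR}=G^c(L^B)_{SR}+K_t .
\]
The crucial feature, extracted from the proof of Lemma \ref{cjoin}, is that this ``$+$'' is a \emph{disjoint union}: no vertex of $G^c(L^B)_{SR}$ is adjacent, in the strong resolving graph, to a vertex of $K_t$. Hence the vertex cover number is additive over this disjoint union, and by Theorem \ref{gala},
\[
sdim_M(\mathbb{IG}(R))=\alpha(\mathbb{IG}(R)_{SR})=\alpha(G^c(L^B)_{SR})+\alpha(K_t).
\]

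To finish, I would substitute the two pieces. By Theorem \ref{gala} together with Theorem \ref{compthm},
\[
\alpha(G^c(L^B)_{SR})=sdim_M(G^c(L^B))=|V(G^c(L^B))|-2^{n-1}+1,
\]
while $\alpha(K_t)=t-1$. Since $V(\mathbb{IG}(R))$ is the disjoint union $V(G^c(L^B))\cup(A_0\setminus R)$, we have $|V(\mathbb{IG}(R))|=|V(G^c(L^B))|+t$, so the surplus $+1$ coming from Theorem \ref{compthm} cancels the $-1$ coming from $\alpha(K_t)$, and
\[
sdim_M(\mathbb{IG}(R))=\bigl(|V(G^c(L^B))|-2^{n-1}+1\bigr)+(t-1)=|V(\mathbb{IG}(R))|-2^{n-1},
\]
as claimed.

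I expect the difficulties to be structural and bookkeeping in nature rather than deep. First, one must verify the join decomposition carefully: that every ideal with all components nonzero is adjacent in $\mathbb{IG}(R)$ to every other nonzero proper ideal (this uses that each $Id(R_i)$ is a chain, so the meet of two nonzero components is again nonzero), and that there are exactly $\prod_{i=1}^n(n_i+1)-1$ such ideals once $R$ itself is discarded. Second, and most importantly, one must justify that the ``$+$'' in Lemma \ref{cjoin} really is a disjoint union, since it is precisely the absence of cross-edges in the strong resolving graph that makes $\alpha$ additive; were those cross-edges present, the cancellation yielding $-2^{n-1}$ would break down. Finally, because the entire blow-up machinery (and Lemma \ref{cjoin}) presumes $n\ge 3$ atoms, the borderline case $n=2$ lies outside these hypotheses: there $G^c(Id(R))$ is a disjoint union of two complete graphs and $\mathbb{IG}(R)=(K_{n_1+1}\sqcup K_{n_2+1})\vee K_t$, so it would require a separate, direct verification of $\mathbb{IG}(R)_{SR}$ and its vertex cover number (or an appeal to the cited result of Dodongeh et al.).
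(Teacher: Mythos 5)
Your proposal is correct and is essentially the paper's own argument: the paper proves this corollary by exactly the same chain---the discussion identifying $\mathbb{IG}(R)=G^{*c}(Id(R))=G^c(Id(R))\vee K_t$ (Lemma \ref{ideal}), Theorem \ref{0-disblowup} to replace $G^c(Id(R))$ by $G^c(L^B)$, Lemma \ref{cjoin} to split the strong resolving graph into the disjoint pieces $G^c(L^B)_{SR}$ and $K_t$, and then Theorem \ref{gala} with Theorem \ref{compthm} plus the additivity of the vertex cover number, giving the same cancellation $(+1)+(t-1)=t$. Your closing caveat about $n=2$ is also well taken, since the paper's blow-up machinery and Lemma \ref{cjoin} are stated only for $n\geq 3$, so the paper's one-line proof silently has the same borderline gap you identified.
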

	
	\subsection{Component graphs of a vector space}
	\par In \cite{das}, A. Das introduced and analyzed the concept of the nonzero component union graph of a finite-dimensional vector space. 
	
	Let $\mathbb{V}$ be a vector space over a field $\mathbb{F}$, with $\mathcal{B}=\{v_1, \dots, v_n\}$ as its basis and $0$ as the null vector. Any vector $a \in \mathbb{V}$ can be uniquely written as a linear combination in the form $a = a_1v_1 + \dots + a_nv_n$. This expression is referred to as the basic representation of $a$ with respect to the basis $\{v_1, \dots, v_n\}$. The skeleton of $a$ with respect to $\mathcal{B}$ is defined as:
	\[
	S_{\mathcal{B}}(a) = \{v_i \mid a_i \neq 0, \, a = a_1v_1 + \dots + a_nv_n\}.
	\]
	
	\par Angsuman Das \cite{das} defined the \textit{nonzero component graph} $\mathbb{IG(V)}$ with respect to $\mathcal{B}$ as follows: The vertex set of graph  $\mathbb{IG(V)}$ is $\mathbb{V}\setminus \{0\}$ and for any $a, b\in \mathbb{V}\setminus \{0\}$, $a$ is adjacent to  $b$ if and only if $a$ and $b$ share at least one $v_i$ with nonzero coefficient in their basic representation, that is, $a$ and $b$ are adjacent in $\mathbb{IG(V)}$ if and only if $S_{\mathcal{B}}(a)\cap S_{\mathcal{B}}(b)\neq\emptyset$. 
	
	\par In \cite{ncv}, Khandekar et al. essentially gave a relation between the nonzero component graph of a finite-dimensional vector space and the zero-divisor graph of the blow-up of a Boolean lattice. Hence, we have the following result.
	
	\begin{theorem}[Khandekar et al.\cite{ncv}]\label{vector}
		Let $\mathbb{V}$ be an $n$-dimensional vector space over a field $\mathbb{F}$. Then $\mathbb{IG(V)}=G^c({L}^B)$ $\vee K_t$, where $t=|V_{12\dots n}|=(|\mathbb{F}|-1)^n$ and $L^B$ is the blow-up of a Boolean lattice $L\cong \mathbf{2}^n$.
	\end{theorem}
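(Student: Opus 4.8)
The plan is to partition the vertices of $\mathbb{IG(V)}$ by the skeleton of each vector and to match this partition with the chain structure of a suitably chosen blow-up. First I would attach to every nonzero $a\in\mathbb{V}$ its skeleton $S_{\mathcal{B}}(a)$, a nonempty subset of the basis $\{v_1,\dots,v_n\}$, which I identify with a nonempty subset $S\subseteq\{1,\dots,n\}$. For $|S|=k$ there are exactly $(|\mathbb{F}|-1)^{k}$ vectors whose skeleton is precisely $S$, since each coordinate indexed by $S$ ranges over the nonzero scalars. By the definition of adjacency in $\mathbb{IG(V)}$, vectors sharing a skeleton $S$ are pairwise adjacent (as $S\cap S\neq\emptyset$), so each skeleton class is a clique, and two vectors with skeletons $S$ and $T$ are adjacent if and only if $S\cap T\neq\emptyset$.

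Next I would take $L^B$ to be the blow-up of $L\cong\mathbf{2}^n$ in which the element of $\mathbf{2}^n\setminus\{0,1\}$ corresponding to a proper nonempty $S=\bigvee_{i\in S}q_i$ is replaced by a chain $C_S$ with exactly $(|\mathbb{F}|-1)^{|S|}$ elements; Definition~\ref{blowup} allows arbitrary finite chain lengths, and the top element $1$ stays the single element $\mathbf{1}$. Since $n\geq 2$, the complement $S^{c}$ of a proper nonempty $S$ is again nonempty, so any element on $C_S$ meets an element on $C_{S^{c}}$ in $0$; hence $Z^{*}(L^B)$ is exactly the set of elements lying on the chains $C_S$ with $S$ proper nonempty, while $\mathbf{1}$ is not a zero-divisor. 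By Remark~\ref{pcbool}, the meet of an element of $C_S$ with one of $C_T$ equals the meet $S\wedge T$ computed in $\mathbf{2}^n$, so it is $0$ precisely when $S\cap T=\emptyset$. Thus in $G^c(L^B)$ two elements on chains $C_S,C_T$ are adjacent if and only if $S\cap T\neq\emptyset$, and elements on a common chain, being comparable and nonzero, are pairwise adjacent.

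Comparing the two descriptions, the bijection sending the $(|\mathbb{F}|-1)^{|S|}$ vectors of skeleton $S$ to the $(|\mathbb{F}|-1)^{|S|}$ elements of $C_S$ is a graph isomorphism from the subgraph of $\mathbb{IG(V)}$ induced on the proper-skeleton vectors onto $G^c(L^B)$: on both sides each class is a clique and two classes are completely joined exactly when their index sets intersect. It then remains to treat the full-skeleton vectors. There are $(|\mathbb{F}|-1)^{n}=|V_{12\dots n}|=t$ of them; any two have intersecting (full) skeletons, so they form $K_t$, and since the full set meets every nonempty $S$, each is adjacent to every proper-skeleton vector. Therefore the full-skeleton vertices constitute a clique that is completely joined to the remaining graph $G^c(L^B)$, which is exactly the join, giving $\mathbb{IG(V)}=G^c(L^B)\vee K_t$.

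I expect the step needing the most care to be the clean separation of $Z^{*}(L^B)$ from the top element $\mathbf{1}$: one must see that the single lattice element $\mathbf{1}$, which carries the full skeleton but is not a vertex of $G^c(L^B)$, corresponds on the vector-space side to a whole block of $t=(|\mathbb{F}|-1)^n$ mutually adjacent vectors, and that this block is precisely what the join with $K_t$ supplies. The conceptual core is the translation of vector adjacency into lattice meets via Remark~\ref{pcbool}; matching chain lengths to the counts $(|\mathbb{F}|-1)^{|S|}$ is routine bookkeeping but is what makes the bijection well defined.
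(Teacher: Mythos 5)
Your proof is correct, but there is nothing in the paper to compare it against: the paper does not prove this statement at all. It is imported as an external result of Khandekar, Cameron and Joshi \cite{ncv}, prefaced only by the remark that that paper ``essentially gave'' the relation between the nonzero component graph and the zero-divisor graph of a blow-up. So your blind attempt is a self-contained proof of a result the paper merely cites, and it is the natural one within the paper's own framework: skeleton classes of size $(|\mathbb{F}|-1)^{|S|}$ on the vector-space side are matched with chains $C_S$ of exactly that many elements in a blow-up built via Definition \ref{blowup}; adjacency between classes is governed on both sides by $S\cap T\neq\emptyset$, using Remark \ref{pcbool} to reduce meets of chain elements to meets in $\mathbf{2}^n$; comparable nonzero elements of a common chain are adjacent in $G^c(L^B)$, so classes are cliques on both sides; $Z^*(L^B)$ is exactly the union of the chains $C_S$ with $S$ proper and nonempty (the top element $\mathbf{1}$ is excluded); and the $t=(|\mathbb{F}|-1)^n$ full-skeleton vectors form a clique completely joined to everything else, which is precisely the $\vee\, K_t$ part. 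Each of these steps checks out against the paper's definitions, and the vertex counts match, so the isomorphism (which is what the paper's ``$=$'' means) is established.

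One caveat, inherited from the statement itself rather than introduced by you: your construction needs the chains $C_S$ to have $(|\mathbb{F}|-1)^{|S|}$ elements, and Definition \ref{blowup} permits only chains of finite length, so the argument tacitly assumes $\mathbb{F}$ is finite. Since the statement already speaks of $K_t$ with $t=(|\mathbb{F}|-1)^n$, this restriction is implicit in the theorem as quoted, but it would be worth stating explicitly. Also, your remark that the argument needs $n\geq 2$ is slightly misplaced --- properness of $S$ already guarantees $S^c\neq\emptyset$ for any $n$; what $n\geq 2$ (indeed $n\geq 3$, the paper's standing convention for $L^B$) buys is that proper nonempty subsets exist, so that $G^c(L^B)$ is nonempty and the later applications via Theorem \ref{star} and Lemma \ref{cjoin} go through.
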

	
	By Lemma \ref{cjoin}, Theorem \ref{compthm} and Theorem \ref{vector}, we have the following result.
	
	\begin{theorem}
		Let $\mathbb{IG(V)}$ be the nonzero component graph of vector spaces with $dim(\mathbb{V})=n\geq 3$. Then $sdim_{M}(\mathbb{IG(V)})=|V(\mathbb{IG(V)})|-2^{n-1}$.
	\end{theorem}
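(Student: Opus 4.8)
The plan is to transfer the computation to the strong resolving graph via Theorem \ref{gala} and then exploit the join structure supplied by Theorem \ref{vector}. First I would apply Theorem \ref{vector} to write $\mathbb{IG(V)} = G^c(L^B) \vee K_t$, where $t = (|\mathbb{F}|-1)^n \geq 1$ and $L^B$ is the blow-up of the Boolean lattice $L \cong \mathbf{2}^n$. Because $n \geq 3$, Definition \ref{blowup} together with Remark \ref{rem4.6} ensures that $L^B$ is an atomic $0$-distributive lattice possessing at least three atoms, so the hypotheses of Lemma \ref{cjoin} are satisfied. Applying Lemma \ref{cjoin} then yields
\[
(\mathbb{IG(V)})_{SR} = (G^c(L^B) \vee K_t)_{SR} = G^c(L^B)_{SR} + K_t,
\]
and by the vertex-set identity \eqref{equation} obtained inside that proof we also have $V((\mathbb{IG(V)})_{SR}) = V(\mathbb{IG(V)})$.

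By Theorem \ref{gala}, the strong metric dimension equals the vertex cover number of the strong resolving graph, so it remains to evaluate $\alpha(G^c(L^B)_{SR} + K_t)$. I would compute this by additivity of the vertex cover number over the two blocks: the complete graph contributes $\alpha(K_t) = t-1$, while the remaining block contributes $\alpha(G^c(L^B)_{SR})$, which by Theorem \ref{gala} is precisely $sdim_M(G^c(L^B))$. Theorem \ref{compthm} evaluates the latter as $|Z^*(L^B)| - 2^{n-1} + 1$. Adding the two contributions gives
\[
\alpha\bigl(G^c(L^B)_{SR} + K_t\bigr) = \bigl(|Z^*(L^B)| - 2^{n-1} + 1\bigr) + (t-1) = |Z^*(L^B)| + t - 2^{n-1}.
\]

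To finish, I would observe that the vertex set of a join is the union of the two vertex sets, whence $|V(\mathbb{IG(V)})| = |V(G^c(L^B))| + |V(K_t)| = |Z^*(L^B)| + t$. Substituting this into the last display produces $sdim_M(\mathbb{IG(V)}) = |V(\mathbb{IG(V)})| - 2^{n-1}$, as required.

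I do not expect a genuine obstacle once Lemma \ref{cjoin} is available; the only delicate point is the treatment of the clique. One must note that in $G^c(L^B)_{SR} + K_t$ the vertices of $K_t$ form a block carrying no edges to $G^c(L^B)_{SR}$ (because, as shown within the proof of Lemma \ref{cjoin}, a vertex of $K_t$ is never mutually maximally distant from a vertex of $G^c(L^B)$), so that $\alpha$ is truly additive and the clique contributes only $t-1$ rather than $t$. This single unit is exactly what converts the $+1$ appearing in Theorem \ref{compthm} into the clean $-2^{n-1}$ of the present statement.
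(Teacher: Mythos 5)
Your proposal is correct and follows exactly the route the paper intends: decompose $\mathbb{IG(V)}=G^c(L^B)\vee K_t$ via Theorem \ref{vector}, pass to the strong resolving graph with Lemma \ref{cjoin}, and combine Theorem \ref{gala} with Theorem \ref{compthm}. The paper's proof is only a citation of these three results, so your write-up simply supplies the details it leaves implicit --- in particular the correct observation that $+$ is a disjoint union, that $\alpha$ is additive over it, and that the clique contributes $\alpha(K_t)=t-1$, which is what turns the $+1$ of Theorem \ref{compthm} into the stated $-2^{n-1}$.
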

	
	\par\noindent 
	
	\noindent\textbf{Funding:}\\
	First author: None.\\
	Second author: Supported by DST(SERB) under the scheme CRG/2022/002184.
	
	\noindent\textbf{Conflict of interest:} The authors declare that there is no conflict of
	interests regarding the publishing of this paper.
	
	\noindent\textbf{Authorship Contributions :} Both authors contributed to the study on the strong metric dimension of the complement of the zero-divisor graph of a lattice. Both authors read and approved the final version of the manuscript.
	
	\noindent \textbf{Data Availability Statement :} Data sharing does not apply to this article, as no datasets were generated or analyzed during the current study.

\end{document}